\definecolor{shadecolor}{rgb}{0.8,0.8,0.8}
\newtheorem{theorem}{Theorem}[section]
\newtheorem{lemma}[theorem]{Lemma}
\newtheorem{proposition}[theorem]{Proposition}
\newtheorem{corollary}[theorem]{Corollary}
\newtheorem{definition}[theorem]{Definition}
\newcommand{\btkz}{\begin{tikzpicture}}
\newcommand{\etkz}{\end{tikzpicture}}
\newenvironment{proof}{{\flushleft \emph{Proof}:}}{\hfill\ding{110}}
\newenvironment{comment}{{\flushleft \fontfamily{pzc}\bfseries\large Comment:}}{}
\newcommand{\secref}[1]{Section~\ref{#1}}
\newcommand{\figref}[1]{Figure~\ref{#1}}
\newcommand{\defref}[1]{Definition~\ref{#1}}
\newcommand{\propref}[1]{Proposition~\ref{#1}}
\newcommand{\lemref}[1]{Lemma~\ref{#1}}
\newcommand{\corref}[1]{Corollary~\ref{#1}}
\newcommand{\cof}{\operatorname{cof}}
\newcommand{\Cof}{\operatorname{Cof}}
\newcommand{\Det}{\operatorname{Det}}
\newcommand{\Vol}{\text{Vol}}
\newcommand{\Volume}{\text{Vol}}
\newcommand{\M}{\mathcal{M}}
\newcommand{\MOne}{\M_1}
\newcommand{\MTwo}{\M_2}
\newcommand{\gOne}{\g_1}
\newcommand{\gTwo}{\g_2}
\newcommand{\dVolOne}{d\text{Vol}_1}
\newcommand{\dVolTwo}{d\text{Vol}_2}
\newcommand{\calX}{\mathcal{X}}
\newcommand{\Ga}{\Gamma}
\renewcommand{\div}{\operatorname{div}}
\newcommand{\piola}{\operatorname{Piola}}
\newcommand{\tr}{\operatorname{tr}}
\newcommand{\trg}{\operatorname{tr}_{\g}}
\newcommand{\id}{\operatorname{I}}
\newcommand{\g}{\mathfrak{g}}
\newcommand{\h}{\mathfrak{h}}
\newcommand{\sgn}{\text{sgn}}
\newcommand{\R}{\mathbb{R}}
\newcommand{\pl}{\partial}
\renewcommand{\M}{\mathcal{M}}
\newcommand{\til}{\tilde}
\newcommand{\brk}[1]{\left(#1\right)}          
\newcommand{\Average}[1]{\left\langle#1\right\rangle}      
\newcommand{\deriv}[2]{\frac{d#1}{d#2}}
\newcommand{\pd}[2]{\frac{\partial#1}{\partial#2}}
\newcommand{\beq}{\begin{equation}}
\newcommand{\eeq}{\end{equation}}
\newcommand{\Hom}{\operatorname{Hom}}
\newcommand{\IP}[2]{\Average{#1,#2}}
\newcommand{\innerp}[1]{\left\langle#1\right\rangle}
\newcommand{\ga}{\gamma}
\newcommand{\ep}{\epsilon}
\newcommand{\extp}{\@ifnextchar^\@extp{\@extp^{\,}}}
\def\@extp^#1{\mathop{\bigwedge\nolimits^{\!#1}}}
\numberwithin{equation}{section}
\begin{document}
\title{A geometric perspective on the Piola identity in Riemannian settings}
\author{Raz Kupferman \footnote{Institute of Mathematics, The Hebrew University.} \,and Asaf Shachar\footnote{Institute of Mathematics, The Hebrew University.}}
\date{}
\maketitle

\begin{abstract}
The Piola identity $\div \cof \nabla f=0$ is a central result in the mathematical theory of elasticity. We prove a generalized version of the Piola identity for mappings between Riemannian manifolds, using two approaches, based on different interpretations of the cofactor of a linear map: one follows the lines of the classical Euclidean derivation  and the other is based on a variational interpretation via Null-Lagrangians. In both cases, we first review the Euclidean case before proceeding to the general Riemannian setting. 
\end{abstract}

\tableofcontents


\section{Introduction}

The Piola identity is a classical result in mathematical elasticity, stating the following:

\begin{quote}
Let $\Omega \subseteq \R^d$ be an open domain and let  $f:\Omega \to \R^d$ be a $C^2$-map. Then,
\beq
\label{eq:div_cof_grad_classic}
\div \cof \nabla f=0,
\eeq
where the cofactor of a matrix is the transpose of its adjugate, and the divergence of the matrix-valued function $\cof \nabla f$ is taken row-by-row.
\end{quote}

Equation \eqref{eq:div_cof_grad_classic} can be proved by a direct calculation;  see e.g. \cite[Ch.~8.1.4.b]{Eva98} and \cite[p.~39]{Cia88}. In essence, the analytical derivation boils down to the commutation of mixed partial derivatives. The downside of this ``proof by computation" is that it does not provide any insights on why does this specific combination of second derivatives vanish. In particular, the cofactor of the gradient of a map has a geometric interpretation as the action of that map on $(d-1)$-dimensional surface elements. Thus, one would hope for a more geometric interpretation of the Piola identity \eqref{eq:div_cof_grad_classic}.

A classical geometric derivation of the Piola identity can be found in the mechanical literature \cite[p.~310]{Ste15}. 
Let $x\in \Omega$, and consider a $d$-dimensional ball $B = B_r(x)$ of radius $r$ centered at $x$.
Denote by $\hat{N}\in\R^d$ the unit normal of $\partial B$ and by $dA$ its surface form. If $f$ is smooth and $\nabla f(x)$ is invertible, then for $r$ small enough, $f$ embeds $B$ in $\R^d$ so that $f(\partial B)$ is a topological sphere.  Denote by $\hat{n}\in\R^d$ the unit normal of $f(\partial B)$ and by $da$ its surface form (see \figref{fig:peppe}).

It is an immediate consequence of the divergence theorem that
\beq
\oint_{f(\partial B)} \hat{n}\, da = 0,
\label{eq:peppe1}
\eeq
which is an identity in $\R^d$.
Pulling back \eqref{eq:peppe1} with $f$ and using a well-known property of the cofactor of $\nabla f$, one obtains
\[
0 = \oint_{f(\partial B)} \hat{n}\, da = \int_{\partial B} \cof \nabla f(\hat{N})\, dA.
\]
Applying the divergence theorem (row-by-row),
\[
\int_{B} \div \cof \nabla f \, dV = 0,
\]
where $dV$ is the volume element in $\Omega$.
Letting $r\to0$, we obtain the desired result. Note that this more geometric proof requires $f$ to be a local diffeomorphism, a condition which is not necessary for \eqref{eq:div_cof_grad_classic} to hold.

\begin{figure}
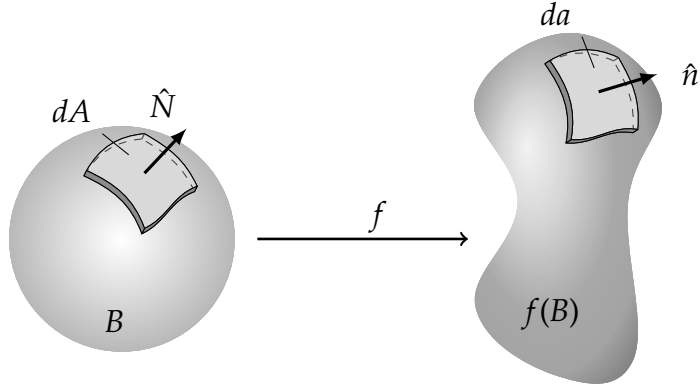

\begin{center}
\btkz
\node[circle,shading=ball, outer color=gray!60!black!40,inner color=white,minimum width=3cm] (ball) at (3,5) {};

\begin{scope}[shift={(7,2)},scale=0.7]
\coordinate (K) at (3,1);
\shade[shading=ballshading] (K) plot [smooth cycle,tension=0.7] coordinates {(1,3) (1.8,5) (1,7) (2.8,8.2) (4.5,7) (3.9,5) (4,2) (2.5,1.7)};
\end{scope}

\tikzset{thin_line/.style={      thin,               solid, color=black}}
\tikzset{dash_line/.style={      thin,              dashed, color=darkgray}}
\tikzset{vect_line/.style={very thick, ->, >=latex,  solid, color=black}}

\begin{scope}[shift={(3.3,5)},scale=0.7]
 \coordinate (a)  at ( 0.00, 0.2);
 \coordinate (am) at (-0.05, 0.1);
 \coordinate (an) at ( 0.45, 0.8);

 \coordinate (b)  at ( 1.00, 1.0);
 \coordinate (bm) at ( 0.95, 0.9);

 \coordinate (c)  at ( 0.00, 2.0);
 \coordinate (cm) at (-0.05, 1.9);

 \coordinate (d)  at (-1.10, 1.3);
 \coordinate (dm) at (-1.15, 1.2);

 \draw[thin_line] (a) to[out= 25, in=200] (b);    
 \draw[thin_line] (b) to[out=115, in=-30] (c);    
 \draw[thin_line] (c) to[out=185, in= 60] (d);    
 \draw[thin_line] (d) to[out=-25, in=110] (a);    

 \draw[fill=gray!30] (a) to[out= 25, in=200] 
                     (b) to[out=115, in=-30] 
                     (c) to[out=185, in= 60] 
                     (d) to[out=-25, in=110] (a);    

 \draw[thin_line] (am) to[out= 25, in=200] (bm);    
 \draw[dash_line] (bm) to[out=115, in=-30] (cm);    
 \draw[dash_line] (cm) to[out=185, in= 60] (dm);    
 \draw[thin_line] (dm) to[out=-25, in=110] (am);    

 \draw[thin_line] (am) -- (a);
 \draw[thin_line] (bm) -- (b);
 \draw[dash_line] (cm) -- (c);
 \draw[thin_line] (dm) -- (d);

 \draw[fill=gray!90] (a)  to[out= 25, in=200] 
                     (b)  -- 
                     (bm) to[out=200, in=25]
                     (am) -- (a);

 \draw[fill=gray!90] (am) --
                     (a)  to[out=110, in=-25]
                     (d)  --
                     (dm) to[out=-25, in=110] (am);

 \coordinate (O)   at ( 0.00, 1.25);
 \coordinate (O1)  at ( 0.16, 1.10);
 \coordinate (O2)  at (-0.10, 1.18);
 \coordinate (P)   at ( 0.80, 2.10);
 \coordinate (h1)  at (-0.80, 2.00);
 \coordinate (h2)  at (-0.30, 1.60);

 \draw[vect_line] (O) -- (P);
 \draw[thin_line] (h1) node[above left] {$dA$} -- (h2);
 \node at (P) [above left] {$\hat{N}$};
\end{scope}

\begin{scope}[shift={(8.9,6.2)}, rotate=-30,scale=0.7]
 \coordinate (a)  at ( 0.00, 0.2);
 \coordinate (am) at (-0.05, 0.1);
 \coordinate (an) at ( 0.45, 0.8);

 \coordinate (b)  at ( 1.00, 1.0);
 \coordinate (bm) at ( 0.95, 0.9);

 \coordinate (c)  at ( 0.00, 2.0);
 \coordinate (cm) at (-0.05, 1.9);

 \coordinate (d)  at (-1.10, 1.3);
 \coordinate (dm) at (-1.15, 1.2);

 \draw[thin_line] (a) to[out= 25, in=200] (b);    
 \draw[thin_line] (b) to[out=115, in=-30] (c);    
 \draw[thin_line] (c) to[out=185, in= 60] (d);    
 \draw[thin_line] (d) to[out=-25, in=110] (a);    

 \draw[fill=gray!30] (a) to[out= 25, in=200] 
                     (b) to[out=115, in=-30] 
                     (c) to[out=185, in= 60] 
                     (d) to[out=-25, in=110] (a);    

 \draw[thin_line] (am) to[out= 25, in=200] (bm);    
 \draw[dash_line] (bm) to[out=115, in=-30] (cm);    
 \draw[dash_line] (cm) to[out=185, in= 60] (dm);    
 \draw[thin_line] (dm) to[out=-25, in=110] (am);    

 \draw[thin_line] (am) -- (a);
 \draw[thin_line] (bm) -- (b);
 \draw[dash_line] (cm) -- (c);
 \draw[thin_line] (dm) -- (d);

 \draw[fill=gray!90] (a)  to[out= 25, in=200] 
                     (b)  -- 
                     (bm) to[out=200, in=25]
                     (am) -- (a);

 \draw[fill=gray!90] (am) --
                     (a)  to[out=110, in=-25]
                     (d)  --
                     (dm) to[out=-25, in=110] (am);

 \coordinate (O)   at ( 0.00, 1.25);
 \coordinate (O1)  at ( 0.16, 1.10);
 \coordinate (O2)  at (-0.10, 1.18);
 \coordinate (P)   at ( 0.80, 2.10);
 \coordinate (h1)  at (-0.80, 2.00);
 \coordinate (h2)  at (-0.30, 1.60);

 \draw[vect_line] (O) -- (P);
 \draw[thin_line] (h1) node[above left] {$da$} -- (h2);
 \node at (P) [above=3pt, right=5pt] {$\hat{n}$};
\end{scope}

\tkzText(2.9 ,3.9){$B$}
\tkzText(8.7,4){$f(B)$}

\draw[line width=1pt, ->] (4.8,5) -- (7.6,5);
\tkzText(6.4,5.3){$f$}
 
\etkz
\end{center}
\caption{Illustration of the geometric setting of the Euclidean Piola identity.}
\label{fig:peppe}
\end{figure}

%

This paper is concerned with a generalization of the Piola identity to mappings between Riemannian manifolds. 
Let  $(\MOne,\gOne)$ and $(\MTwo,\gTwo)$ be smooth, oriented $d$-dimensional Riemannian manifolds. Then, for every $f\in C^2(\MOne;\MTwo)$, the \emph{Riemannian Piola identity} is
\beq
\label{eq:Piola_id_general_strong}
\delta_{\nabla^{f^*T\MTwo}} \Cof df =0,
\eeq
where the cofactor of $df$ is defined intrinsically (see \secref{sec:det_cof} for details) and $\delta_{\nabla^{f^*T\MTwo}}$ is the co-differential induced by the Riemannian connection on $f^*T\MTwo$ (see \secref{sec:coderivative}).
Equivalently, for every compactly-supported $\xi \in\Gamma(f^*T\MTwo)$,
\beq
\int_{\MOne} \innerp{\Cof df, \nabla^{f^*T\MTwo} \xi}_{\gOne,\gTwo}\,\dVolOne = 0,
\label{eq:Cof_d}
\eeq
where $\IP{\cdot}{\cdot}_{\gOne,\gTwo}$ is the inner-product on $T^*\MOne\otimes f^*T\MTwo$ induced by the metrics $\gOne$ and $\gTwo$.


The Piola identity for mappings between Riemannian manifolds was considered by Marsden and Hughes \cite[pp. 116--117]{MH83}. Let $f:\MOne \to \MTwo$ be a local diffeomorphism.
For  a vector field $X$ on $\MTwo$, its  \emph{Piola transform} \cite[Def. 7.18]{MH83}, $\piola(X)$,  is a vector field on $\MOne$,  \beq
\label{eq:Marsden_eq_3_i}
\piola(X) = (\Cof df)^T (f^*X),
\eeq
where $f^*X$ is the pullback of $X$. 
The identity derived in \cite{MH83} (where it is termed the Piola identity) is
\beq
\label{eq:Piola_Marsden_i}
\div \piola(X)= \brk{(\div X) \circ f}\, \Det df,
\eeq
which is a relation between the divergence of $X$ and the divergence of its Piola transform (here and below we denote by $\Det$ the (intrinsic) determinant of a linear map and by $\det$ the determinant of the matrix representing a map). 
We shall see below that
\beq
\label{eq:Marsden2i}
\div \piola(X) =  \brk{(\div X)\circ f} \, Det df- \IP{  f^*X}{\delta_{\nabla^{f^*T\MTwo}}\Cof df} ,
\eeq
which together with \eqref{eq:Piola_Marsden_i} implies \eqref{eq:Piola_id_general_strong}.

Marsden and Hughes further present a coordinate expression for the Riemannian Piola identity (i.e., a differential relation for $f$, which does not involve its action on vector fields). Their identity is however wrong; a derivation of the coordinate expression is presented in Appendix~\ref{app:coordinates}, and compared to the expression derived in \cite{MH83}.

Perhaps surprisingly, we show that the coordinate expression can be reduced into a form which does not involve the metrics $\g,\h$ at all, and looks identical to the Euclidean representation! After a second thought, this is not that surprising, as the coordinate representation of $f$ must satisfy an identity whose sole origin is the commutation of second derivatives.

The goal of this paper is to clarify several aspects of both Euclidean and Riemannian Piola identities, \eqref{eq:div_cof_grad_classic} and \eqref{eq:Piola_id_general_strong}. We prove the Riemannian Piola identity using two different approaches, based on two  different characterizations of the cofactor of a linear map.  

The first proof follows the continuum-mechanical approach depicted above, which is used to  derive Property \eqref{eq:Piola_Marsden_i} of the Piola transform. It relies on a characterization of the cofactor of a linear map via its action on $(d-1)$-dimensional surface elements. As a side result, our proof sheds new light on the classical proof in the Euclidean setting, showing that Eq.~\eqref{eq:peppe1}, which seems to be at the heart of the proof, is totally immaterial to that proof. 

The second proof was presented in \cite{KMS17}. It is based on a characterization of the cofactor as the derivative of the determinant. This paper contains a simplified Euclidean version of this proof, which motivates the steps in the more general setting of Riemannian manifolds.
We show that \eqref{eq:Piola_id_general_strong} is the Euler-Lagrange equation of a null-Lagrangian (a functional for which every map is critical), hence holds for every sufficiently regular map. A connection between the Piola identity and null-Lagrangians was already established in Evans \cite{Eva98}, where the existence of a null-Lagrangian was inferred from the Piola identity. We advocate that it should be viewed the other way around: the existence of a null-Lagrangian is the origin of the Piola identity. 
In fact, the Riemannian Piola identity is immediate once the null-Lagrangian has been identified, whereas its explicit  derivation is quite tedious. In both proofs, we start by reviewing the Euclidean case before proceeding to the Riemannian case.

This paper is structured as follows: 
In \secref{sec:prelims}, we introduce the geometric entities which play a part in the Riemannian Piola identity.   \secref{sec:elastic_approach} presents a proof based on the Piola transform. In \secref{sec:Null-Lagrangians_approach},  we present a proof  based on null-Lagrangians. In Appendix~\ref{app:deriv_det_bundle} we prove a lemma generalizing to the setting of vector bundles the well-known fact that the cofactor of a linear operator is the derivative of its determinant. Finally, we write in Appendix~\ref{app:coordinates} the Riemannian Piola identity in coordinates.

\section{Geometric preliminaries}
\label{sec:prelims}

\subsection{Determinant and cofactor of linear maps}
\label{sec:det_cof}

The notions of determinant and  cofactor of a linear map are at the heart of the Piola identity and its proof. While these notions are widely used in the context of a matrix representing a transformation with respect to orthonormal bases, it is valuable to present their intrinsic, coordinate-free definitions. For a $d$-dimensional oriented inner-product space $V$, we denote by  $\star_V^k:\Lambda_k(V)\to\Lambda_{d-k}(V)$ the Hodge-dual operators and by $\Vol_V$ the unit volume form (i.e., $\Vol_V(e_1,\dots,e_d)=1$ for every positively-oriented orthonormal basis of $V$).

\begin{definition}[determinant]
\label{def:intrinsic_det}
Let $V$ and $W$ be oriented, $d$-dimensional inner-product spaces. Let $A\in\Hom(V,W)$.
The determinant of $A$, $\Det A\in \Hom(\R,\R) \simeq \R$, is defined by
\[
\Det A := \star^d_W \circ \extp^d A \circ \star^0_V,
\]
where $\extp^d A = A \wedge \ldots \wedge A$, $d$ times, and we identify $\extp^0 V \simeq \extp^0 W \simeq \R$.
\end{definition}

If $(v_1,\dots,v_d)$ and $(w_1,\dots,w_d)$ are positively-oriented orthonormal bases for $V$ and $W$ respectively, and if $\hat{A}$ is the matrix representing $A$ with respect to these bases, then
\beq
\label{eq:det_is_action_cubes}
\extp^{d}  A (v_1 \wedge \dots \wedge v_d)= \det\hat{A} \, w_1 \wedge \dots \wedge w_d,
\eeq
from which follows that $\Det A = \det \hat{A}$. That is, the definition of the determinant of a linear map is consistent with the definition of the determinant of the matrix representing it with respect to any pair of positively-oriented orthonormal bases.

As is well-known, the determinant of a linear operator satisfies the following:

\begin{proposition}
\label{pn:Det_and_volumes_1}
Let $V$ and $W$ be oriented, $d$-dimensional inner-product spaces. 
Let $A\in\Hom(V,W)$.  Then,
\[
\Det A = \frac{A^* \Vol_W}{\Vol_V}.
\]
\end{proposition}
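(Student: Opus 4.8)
The plan is to reduce the statement to the already-established identity $\Det A = \det\hat A$ recorded in \eqref{eq:det_is_action_cubes}, by computing the scalar $A^*\Vol_W/\Vol_V$ explicitly in coordinates. First I would observe that this quotient is genuinely well-defined: since $\dim V = d$, the space of alternating $d$-forms on $V$ is one-dimensional, so the pullback $A^*\Vol_W$ — itself an alternating $d$-form on $V$ — must be a scalar multiple of the nowhere-vanishing form $\Vol_V$, and that scalar is exactly what the quotient notation denotes.

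Next I would fix a positively-oriented orthonormal basis $(v_1,\dots,v_d)$ of $V$ and a positively-oriented orthonormal basis $(w_1,\dots,w_d)$ of $W$, and let $\hat{A} = (\hat{A}^i_j)$ be the matrix of $A$, i.e. $A v_j = \sum_i \hat{A}^i_j\, w_i$. Evaluating the identity $A^*\Vol_W = c\,\Vol_V$ on the tuple $(v_1,\dots,v_d)$ and using $\Vol_V(v_1,\dots,v_d) = 1$, the constant is $c = (A^*\Vol_W)(v_1,\dots,v_d) = \Vol_W(A v_1,\dots,A v_d)$. Expanding each $A v_j$ in the $w$-basis and invoking the multilinearity and total antisymmetry of $\Vol_W$ together with $\Vol_W(w_1,\dots,w_d) = 1$, this collapses to the Leibniz expansion of $\det\hat{A}$, so that $A^*\Vol_W/\Vol_V = \det\hat{A}$. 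Comparing with \eqref{eq:det_is_action_cubes}, which gives $\Det A = \det\hat{A}$ with respect to the same pair of bases, I would conclude $\Det A = A^*\Vol_W/\Vol_V$.

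I do not anticipate a real obstacle; the only point demanding care is the clean bookkeeping between the two uses of the phrase ``volume form'' — the alternating $d$-covector $\Vol_V$ that enters the pullback, versus the unit $d$-\emph{vector} implicit in the Hodge-dual definition of $\Det$ in \ref{def:intrinsic_det}. In an oriented inner-product space these are identified via the metric, so that $\star^0_V(1)$ is the unit $d$-vector $v_1\wedge\cdots\wedge v_d$ and $\star^d_W$ sends $w_1\wedge\cdots\wedge w_d$ to $1$. Keeping this identification explicit is what makes the two computations line up.

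Alternatively, one could argue basis-free directly from \ref{def:intrinsic_det}: applying $\extp^d A$ to the unit $d$-vector $\star^0_V(1) = v_1\wedge\cdots\wedge v_d$ gives $A v_1\wedge\cdots\wedge A v_d = \det\hat{A}\,(w_1\wedge\cdots\wedge w_d)$ by \eqref{eq:det_is_action_cubes}, and then $\star^d_W$ returns the scalar $\det\hat{A}$; pairing this with the same $\det\hat{A}$ obtained for the quotient yields the claim. I find the coordinate computation the most transparent route, so I would present that as the main argument and relegate the intrinsic version to a remark.
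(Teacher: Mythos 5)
Your proposal is correct and follows exactly the route the paper intends: the paper's own ``proof'' is the one-line remark that the identity is immediate from \defref{def:intrinsic_det} by choosing oriented orthonormal bases, and your computation---evaluating $A^*\Vol_W$ on such a basis to get $\det\hat{A}$ and matching it against $\Det A = \det\hat{A}$ from \eqref{eq:det_is_action_cubes}---is precisely that argument spelled out. Your attention to the vector/covector bookkeeping (the $d$-covector $\Vol_V$ versus the unit $d$-vector $\star^0_V(1)$) is a worthwhile detail the paper leaves implicit.
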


The proof is immediate from the definition, and can be obtained by choosing oriented orthonormal bases for $V$ and $W$. 

\begin{definition}[cofactor]
\label{def:intrinsic_cof}
Let $V$ and $W$ be oriented, $d$-dimensional inner-product spaces.
Let $A\in\Hom(V,W)$.
The cofactor  of $A$, $\Cof A \in\Hom(V,W)$, is defined by
\[
\Cof A := (-1)^{d-1} \star_W^{d-1} \circ \extp^{d-1} A \circ \star_V^1,
\]
where we identify $\extp^1 V \simeq V$ and $\extp^1 W \simeq W$.
\end{definition}

The intrinsic definition of the cofactor is consistent with the definition of the matrix-cofactor. The matrix representing $\Cof A$ is the matrix-cofactor of the matrix representing $A$, when the bases are positively-oriented and orthonormal.

While the determinant of a linear map encodes information about the action of that map on $d$-dimensional volume elements, the cofactor encodes information about the action of that map on $(d-1)$-dimensional hyper-cubes. Since the Hodge-dual operators are isometric isomorphisms, $\Cof A$ is essentially $\extp^{d-1} A$. For example, in the isotropic case, where  $V=W$  and $A=\lambda \id_V$, we obtain $\Cof A=\lambda^{d-1} \id_V$.

The cofactor and the determinant of a linear operator satisfy several relations which will be used throughout this paper. First, 
\beq
\label{eq:cof_det_expansion}
\Det A \, \id_V = A^T \circ \Cof A = (\Cof A)^T \circ A
\eeq
is an intrinsic version of a well-known property of the matrix cofactor; it is essentially the Laplace expansion of the determinant. 

The next proposition provides another relation between the cofactor and the determinant of  a linear map. 
Before stating it, we recall that in a $d$-dimensional oriented inner-product space, every unit vector $v\in V$ induces an orientation on its $(d-1)$-dimensional orthogonal complement, $\{v\}^\perp$: an orthonormal basis $(v_1,\dots,v_{d-1})$ for $\{v\}^\perp$ is positively-oriented if $(v,v_1,\dots,v_{d-1})$ is a positively-oriented orthonormal basis for $V$.

\begin{proposition}
\label{lem:Cof_encodes_restricted_det}
Let $\til V,\til W$ be oriented, $d$-dimensional inner-product spaces. Let $v^{\perp} \in \til V$ and $w^{\perp} \in \til W$ be unit vectors, and denote by $V=\{v^{\perp}\}^\perp \subseteq \til V$ and $W=\{w^{\perp}\}^\perp  \subseteq \til W$ their $(d-1)$-dimensional orthogonal complements, with the orientations induced by $(\til V,v^\perp)$ and $(\til W,w^\perp)$. 
Let $\til A\in\Hom(\til V, \til W)$ satisfy $\til A(V) \subseteq W$; denote $A=\til A|_V\in\Hom(V,W)$.  Then,
\[
\Cof \til A (v^\perp)= \Det A\, w^\perp.
\]
\end{proposition}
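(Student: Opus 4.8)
The plan is to reduce the statement to a direct computation in orthonormal bases adapted to $v^\perp$ and $w^\perp$, carefully tracking the signs introduced by the Hodge-dual operators. First I would choose a positively-oriented orthonormal basis $(v_1,\dots,v_{d-1})$ for $V$, with its induced orientation; by the very definition of the induced orientation, $(v^\perp,v_1,\dots,v_{d-1})$ is then a positively-oriented orthonormal basis for $\til V$. Similarly I would pick $(w_1,\dots,w_{d-1})$ for $W$ so that $(w^\perp,w_1,\dots,w_{d-1})$ is positively oriented for $\til W$. With these bases, since $v^\perp$ is the first basis vector, the Hodge star on $1$-vectors gives $\star_{\til V}^1 v^\perp = v_1\wedge\dots\wedge v_{d-1}$.

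Next I would push this $(d-1)$-vector through $\extp^{d-1}\til A$. Because $\til A(V)\subseteq W$ and $A=\til A|_V$, each $\til A v_i = A v_i$ lies in $W$, so $\extp^{d-1}\til A(v_1\wedge\dots\wedge v_{d-1}) = Av_1\wedge\dots\wedge Av_{d-1} = \extp^{d-1}A(v_1\wedge\dots\wedge v_{d-1})$. Viewing $A$ as a map between the $(d-1)$-dimensional oriented inner-product spaces $V$ and $W$, the identity \eqref{eq:det_is_action_cubes} applied in dimension $d-1$ yields $\extp^{d-1}A(v_1\wedge\dots\wedge v_{d-1}) = \Det A\,(w_1\wedge\dots\wedge w_{d-1})$, where $\Det A$ is precisely the intrinsic determinant of $A$ from Definition~\ref{def:intrinsic_det}.

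Finally I would apply $(-1)^{d-1}\star_{\til W}^{d-1}$. The key sign computation is $\star_{\til W}^{d-1}(w_1\wedge\dots\wedge w_{d-1})=(-1)^{d-1}w^\perp$: this Hodge dual must be a multiple $c\,w^\perp$ of $w^\perp$, and testing against $w_1\wedge\dots\wedge w_{d-1}$ in the defining relation $\beta\wedge\star\alpha = \innerp{\beta,\alpha}\Vol_{\til W}$ forces $c\,(w_1\wedge\dots\wedge w_{d-1}\wedge w^\perp) = \Vol_{\til W}$; since moving $w^\perp$ to the front costs $d-1$ transpositions, $w_1\wedge\dots\wedge w_{d-1}\wedge w^\perp = (-1)^{d-1}\Vol_{\til W}$, whence $c=(-1)^{d-1}$. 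Combining the three steps, $\Cof\til A(v^\perp) = (-1)^{d-1}\,\Det A\,(-1)^{d-1}w^\perp = \Det A\,w^\perp$, since $(-1)^{d-1}(-1)^{d-1}=1$.

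The computation itself is routine; the only place an error could creep in is the bookkeeping of orientations and Hodge-star signs — in particular, verifying that the two factors of $(-1)^{d-1}$ (one built into the definition of $\Cof$, one arising from $\star_{\til W}^{d-1}$) cancel rather than compound. A cleaner, basis-free variant would instead record $\star_{\til V}^1 v^\perp = \iota_{v^\perp}\Vol_{\til V}$ together with the analogous contraction on $\til W$, but the adapted-basis argument above is the most transparent, and the hypothesis $\til A(V)\subseteq W$ enters exactly once, in the passage from $\extp^{d-1}\til A$ to $\extp^{d-1}A$.
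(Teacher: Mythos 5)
Your proof is correct and takes essentially the same route as the paper's: both choose positively-oriented orthonormal bases adapted to $v^\perp$ and $w^\perp$, use $\til A|_V = A$ to pass from $\extp^{d-1}\til A$ to $\extp^{d-1}A$, identify the result with $\Det A\,(w_1\wedge\dots\wedge w_{d-1})$, and observe that the sign $(-1)^{d-1}$ in the definition of $\Cof$ cancels the sign from $\star_{\til W}^{d-1}$. The only difference is that you make explicit the Hodge-star sign bookkeeping that the paper leaves to the reader, which is a welcome clarification rather than a deviation.
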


\begin{proof}
Let $(v_1,\dots,v_{d-1})$ and $(w_1,\dots,w_{d-1})$ be positively-oriented orthonormal bases for $V$ and $W$, respectively. Then,
\[
\begin{split}
\Cof \til A (v^\perp) &=(-1)^{d-1}  \star^{d-1}_{\til W}  \extp^{d-1} \til A \star_{\til V}^1 (v^\perp) \\
&=(-1)^{d-1}  \star^{d-1}_{\til W}  \extp^{d-1} \til A (v_1 \wedge \dots \wedge v_{d-1}) \\
&= (-1)^{d-1}  \star^{d-1}_{\til W}  \extp^{d-1}  A (v_1 \wedge \dots \wedge v_{d-1}) \\
&=(-1)^{d-1}  \star^{d-1}_{\til W}  \Det A\, (w_1 \wedge \dots \wedge w_{d-1}) \\
&=\Det A  \, w^\perp,
\end{split}
\]
where in the passage to the third line we used the fact that $A = \til A|_V$, and the passage to the fourth line follows from the intrinsic definition of the determinant. The first and the last equalities follow from the fact that $(v^\perp,v_1,\dots,v_{d-1})$ and $(w^\perp,w_1,\dots,w_{d-1})$ are positively-oriented orthonormal bases for $\til V$ and $\til W$.
\end{proof}

\subsection{Determinant and cofactor of linear bundle maps}
\label{sec:det_cof2}

In this section we apply the linear-algebraic constructs of the previous section to linear bundle maps between vector bundles. 
Let  $(\MOne,\gOne)$ and $(\MTwo,\gTwo)$ be smooth, oriented $d$-dimensional Riemannian manifolds.
We denote by $\star_{\MOne}^k:\Lambda_k(T\MOne)\to\Lambda_{d-k}(T\MOne)$ and $\star_{\MTwo}^k:\Lambda_k(T\MTwo)\to\Lambda_{d-k}(T\MTwo)$ the Hodge-dual operators of the tangent bundles (note that the Hodge-dual in Riemannian settings usually acts on the exterior algebra of the cotangent bundle). We denote by $\dVolOne$ and $\dVolTwo$ the corresponding volume forms. 

Let $f:\MOne\to \MTwo$ be a differentiable mapping; its differential is a linear bundle map,
\[
df: T\MOne \to f^* T\MTwo.
\]
The determinant of $df$,
\[
\Det df = \star^d_{\MTwo} \circ \extp^d df \circ \star^0_{\MOne}
\]
is a function on $\MOne$, whereas its cofactor,
\[
\Cof df = (-1)^{d-1} \star^{d-1}_{\MTwo} \circ \extp^{d-1} df \circ \star^1_{\MOne}
\]  
is a section of $T^*\MOne\otimes f^*T\MTwo$.
By \propref{pn:Det_and_volumes_1}, the determinant is the ratio of the volume forms,
\[
\Det df = \frac{f^\star \dVolTwo}{\dVolOne}.
\]

Let $\M$ be a smooth, oriented $d$-dimensional manifold with boundary and let 
$p \in \partial\M$.  A vector $v \in T_p\M \setminus T_p\partial\M$ is called  \emph{outward-pointing} if for some $\ep >0$ there exists a smooth curve $\gamma:(-\ep,0] \to \M$ such that $\ga(0)=p$ and $\dot \ga(0)=v$.
Let $\xi$ be an outward-pointing vector field on $\partial\M$;  $\xi$ induces an orientation on $T\partial\M$, called the Stokes orientation: for $p\in\partial\M$, $(v_1,\dots,v_{d-1})$ is a positively-oriented basis for $T_p\partial\M$ if $(\xi_p,v_1,\dots,v_{d-1})$ is a positively-oriented basis for $T_p\M$. This orientation does not depend on the choice of the outward-pointing vector field $\xi$. The Stokes  orientation is naturally diffeomorphic-invariant in the following sense:

\begin{lemma}
\label{lem:naturaliy_Stokes_orientation}
Let $f:\MOne \to \MTwo$ be an orientation-preserving diffeomorphism between $d$-dimensional oriented manifolds with boundaries. Then 
\[
f|_{\pl \MOne}:\pl \MOne \to \pl \MTwo
\] 
is also an orientation-preserving diffeomorphism, where the orientations on the boundaries are the induced Stokes orientations.
\end{lemma}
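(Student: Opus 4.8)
The plan is to reduce everything to the pointwise linear-algebra of $df_p$ together with the definition of the Stokes orientation. First I would recall the standard fact (invariance of the boundary) that an orientation-preserving diffeomorphism $f$ of manifolds with boundary restricts to a diffeomorphism $f|_{\pl\MOne}:\pl\MOne\to\pl\MTwo$. Consequently, for $p\in\pl\MOne$ and $q=f(p)$, the differential $df_p:T_p\MOne\to T_q\MTwo$ is a linear isomorphism carrying $T_p\pl\MOne$ isomorphically onto $T_q\pl\MTwo$, and $d(f|_{\pl\MOne})_p = df_p|_{T_p\pl\MOne}$. This is what lets me pass freely between the restricted map on the boundary and the ambient differential.

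The key intermediate step is to show that $df_p$ sends outward-pointing vectors to outward-pointing vectors. Given an outward-pointing $v\in T_p\MOne\setminus T_p\pl\MOne$, the definition provides a smooth curve $\gamma:(-\ep,0]\to\MOne$ with $\gamma(0)=p$ and $\dot\gamma(0)=v$. Then $f\circ\gamma:(-\ep,0]\to\MTwo$ is smooth with $(f\circ\gamma)(0)=q$ and $(f\circ\gamma)'(0)=df_p(v)$. Since $df_p$ is an isomorphism mapping $T_p\pl\MOne$ onto $T_q\pl\MTwo$ and $v\notin T_p\pl\MOne$, we have $df_p(v)\notin T_q\pl\MTwo$; the curve $f\circ\gamma$ then exhibits $df_p(v)$ as outward-pointing by definition.

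With this in hand, I would fix an outward-pointing vector field $\xi$ on $\pl\MOne$ and define $\eta_q := df_{f^{-1}(q)}\bigl(\xi_{f^{-1}(q)}\bigr)$, a section of $T\MTwo|_{\pl\MTwo}$. It is smooth because $f$ is a diffeomorphism, and outward-pointing at every point by the previous step, hence a legitimate field defining the Stokes orientation on $\pl\MTwo$ (which by hypothesis is independent of the chosen outward-pointing field). Now take a positively-oriented basis $(v_1,\dots,v_{d-1})$ of $T_p\pl\MOne$; by the definition of the Stokes orientation, $(\xi_p,v_1,\dots,v_{d-1})$ is positively oriented in $T_p\MOne$. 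Applying the orientation-preserving isomorphism $df_p$, the tuple $(\eta_q,\, df_p v_1,\dots,df_p v_{d-1})$ is positively oriented in $T_q\MTwo$, so $(df_p v_1,\dots,df_p v_{d-1})$ is positively oriented in $T_q\pl\MTwo$ by the definition of the Stokes orientation via $\eta$. Since $df_p v_i = d(f|_{\pl\MOne})_p v_i$, this shows $f|_{\pl\MOne}$ is orientation-preserving.

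I expect the only point requiring genuine care to be the claim that $df$ preserves outward-pointingness and that the pushed-forward field $\eta$ is a bona fide smooth outward-pointing vector field on $\pl\MTwo$; once those are secured, the conclusion is a direct unwinding of the definition of the Stokes orientation, using the already-granted independence of that orientation from the auxiliary field.
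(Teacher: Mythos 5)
Your proof is correct and takes essentially the same approach as the paper's: push an outward-pointing vector field on $\pl \MOne$ forward under $df$, then unwind the definition of the Stokes orientation using that $f$ is orientation-preserving and that $d(f|_{\pl \MOne})_p = df_p|_{T_p \pl \MOne}$. If anything, you fill in a detail the paper leaves implicit, namely the curve argument showing that $df_p$ sends outward-pointing vectors to outward-pointing vectors.
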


\begin{proof}
Let $\xi$ be some outward-pointing vector field on $\pl \MOne$.  Then $df(\xi)$ is outward-pointing on $\pl \MTwo$.
Let $p \in \partial\MOne$ and suppose that $(v_1,\dots,v_{d-1})$ is a positive basis for $T_p \partial\MOne$. By definition, $(\xi_p,v_1,\dots,v_{d-1}) $ is a positive basis for $T_p\MOne$. Since $f$ is orientation preserving,  $(df_p(\xi_p),df_p(v_1),\dots,df_p(v_{d-1}))$ is a positive basis for $T_{f(p)}\MTwo$. Since $df_p(\xi_p)$ is outward-pointing in $T_{f(p)}\MTwo$, this implies $(df_p(v_1),\dots,df_p(v_{d-1}))$ is a positive basis for $T_{f(p)} \partial\MTwo$. Note that for every $i=1,\dots,d-1$, 
\[
df_p(v_i)=df_p|_{T_p \partial\MOne}(v_i)=d(f|_{\partial\MOne})_p(v_i).
\]
\end{proof}

We may now apply \propref{lem:Cof_encodes_restricted_det} to maps between manifolds:

\begin{proposition}
\label{prop:det_cof_bundle}
Let $(\MOne,\gOne)$ and $(\MTwo,\gTwo)$ be smooth, oriented $d$-dimensional Riemannian manifolds with boundaries and let $f:\MOne\to\MTwo$ be a diffeomorphism. Let $\nu_1$ and $\nu_2$ be the unique outward-pointing normal unit vector fields on $\partial\M_1$ and $\partial\M_2$. Then, 
\[
\Cof df(\nu_1) = \Det(df|_{T\partial \MOne}) f^* \nu_2,
\]
which is an identity between sections of $f^* T\MTwo$.
\end{proposition}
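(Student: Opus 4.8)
The plan is to reduce \propref{prop:det_cof_bundle} to the pointwise statement of \propref{lem:Cof_encodes_restricted_det}, applied fiberwise at each boundary point. Fix $p \in \partial\MOne$ and set $q = f(p) \in \partial\MTwo$. The ambient inner-product spaces are $\til V = T_p\MOne$ and $\til W = T_q\MTwo$, and the distinguished unit vectors are the outward normals $v^\perp = (\nu_1)_p$ and $w^\perp = (\nu_2)_q$. Their orthogonal complements are $V = T_p\partial\MOne$ and $W = T_q\partial\MTwo$, since the outward normal is by definition orthogonal to the boundary tangent space. The linear map in play is $\til A = df_p \in \Hom(\til V, \til W)$.

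The first thing to verify is the hypothesis $\til A(V) \subseteq W$ of \propref{lem:Cof_encodes_restricted_det}, i.e. that $df_p(T_p\partial\MOne) \subseteq T_q\partial\MTwo$. Since $f$ is a diffeomorphism carrying $\MOne$ onto $\MTwo$, it restricts to a diffeomorphism $f|_{\partial\MOne}:\partial\MOne \to \partial\MTwo$, and its differential at $p$ is exactly $df_p|_{T_p\partial\MOne}$ (this is the content of the displayed identity closing the proof of \lemref{lem:naturaliy_Stokes_orientation}). Hence $df_p$ maps $T_p\partial\MOne$ into $T_q\partial\MTwo$, and the restriction $A = df_p|_{T_p\partial\MOne}$ equals $d(f|_{\partial\MOne})_p$.

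The second point requiring care is the matching of orientations. \propref{lem:Cof_encodes_restricted_det} equips $V$ and $W$ with the orientations induced by $(\til V, v^\perp)$ and $(\til W, w^\perp)$, meaning $(v_1,\dots,v_{d-1})$ is positive in $V$ iff $(v^\perp, v_1,\dots,v_{d-1})$ is positive in $\til V$. The definition of the Stokes orientation on the boundary uses an outward-pointing vector field in the same slot. Because $\nu_1$ and $\nu_2$ are themselves outward-pointing (they are the outward unit normals), and the Stokes orientation is independent of the choice of outward-pointing field, the orientation that \propref{lem:Cof_encodes_restricted_det} puts on $T_p\partial\MOne = \{(\nu_1)_p\}^\perp$ coincides with the Stokes orientation on $\partial\MOne$, and likewise for $\MTwo$. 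I would state this identification explicitly, as it is the main conceptual obstacle: one must confirm that ``the orientation induced by the outward normal'' in the linear-algebra lemma is precisely the Stokes orientation used to make sense of $\Det(df|_{T\partial\MOne})$.

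With both hypotheses in place, \propref{lem:Cof_encodes_restricted_det} gives, pointwise at $p$,
\[
\Cof df_p\big((\nu_1)_p\big) = \Det\!\big(df_p|_{T_p\partial\MOne}\big)\,(\nu_2)_q = \Det\!\big(df|_{T\partial\MOne}\big)(p)\,(f^*\nu_2)_p.
\]
Since $p \in \partial\MOne$ was arbitrary and all the objects vary smoothly, this is the claimed identity between sections of $f^*T\MTwo$. The only genuine work is the orientation bookkeeping of the previous paragraph; everything else is a direct fiberwise invocation of the already-proved linear-algebraic proposition together with the boundary-restriction identity from \lemref{lem:naturaliy_Stokes_orientation}.
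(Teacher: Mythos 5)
Your proof is correct and follows exactly the paper's own route: the paper's proof is a one-line fiberwise application of \propref{lem:Cof_encodes_restricted_det} with precisely your identifications $\til V = T_p\MOne$, $\til W = T_{f(p)}\MTwo$, $v^\perp=(\nu_1)_p$, $w^\perp=(\nu_2)_{f(p)}$, $\til A = df_p$. The only difference is that you explicitly verify the two hypotheses (boundary-tangency of $df_p$ and the agreement of the normal-induced orientation with the Stokes orientation) that the paper calls ``immediate'' and leaves unstated.
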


\begin{proof}
This is an immediate application of \propref{lem:Cof_encodes_restricted_det}, with  $\til V = T_p \MOne$, $\til W = T_{f(p)}\MTwo$, $v^\perp=(\nu_1)_p$,  $w^\perp = (\nu_2)_{f(p)}$ and $\til A = df_p$ at every $p\in\partial\MOne$.
\end{proof}

The last relation between the cofactor and the determinant of a linear bundle map states that the cofactor is the derivative of the determinant: 

\begin{lemma}
\label{lem:Cofactor_grad_Determinant_bundle}
Let $E$ and $F$ be oriented vector bundles of rank $d$  over a smooth manifold $\M$, equipped with smooth metrics and metrically-compatible connections. Let $A:E \to F$ be a smooth bundle map. Then, for every $X \in \Gamma(T\MOne)$
\[
d(\Det A)(X)= \IP{\Cof A}{\nabla_X A}_{E,F},
\]
where $\Det A$ and $\Cof A$ are defined as in \ref{def:intrinsic_det} and \ref{def:intrinsic_cof}, using the metrics and orientations on $E,F$, and $\nabla$ is the tensor-product connection on $E^* \otimes F$ induced by the connections on $E$ and $F$.
 \end{lemma}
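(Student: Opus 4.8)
The plan is to exploit that both sides of the claimed identity are tensorial in $X$ and pointwise in $\M$, and to reduce the statement, at an arbitrary point $p\in\M$, to the classical linear-algebra formula that the derivative of the matrix determinant is the matrix cofactor (Jacobi's formula / the Laplace expansion). The only genuine work is to set up a local frame in which covariant derivatives at $p$ collapse to ordinary derivatives of the component functions, so that the metric-compatible connection contributes nothing at $p$ beyond rendering the relevant structure parallel.

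First I would fix $p\in\M$ and choose positively-oriented orthonormal local frames $(e_1,\dots,e_d)$ for $E$ and $(f_1,\dots,f_d)$ for $F$ on a neighborhood of $p$ that are parallel at $p$, i.e. $\nabla e_i|_p=0$ and $\nabla f_j|_p=0$. Such frames exist precisely because the connections are metrically compatible: starting from a positively-oriented orthonormal basis of the fibers over $p$ and parallel-transporting it along the rays of a chart centered at $p$ produces frames that are orthonormal on a whole neighborhood (parallel transport is an isometry, and every nearby point lies on a unique ray) and whose covariant derivative vanishes in every direction at $p$. Writing $A\,e_i=\sum_j A_{ij}f_j$, the smooth matrix $\hat A=(A_{ij})$ represents $A$ with respect to these bases. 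By the consistency of the intrinsic determinant and cofactor with their matrix counterparts for positively-oriented orthonormal frames (see \eqref{eq:det_is_action_cubes} and the remark following \defref{def:intrinsic_cof}), we have $\Det A=\det\hat A$ as functions near $p$, while the components of $\Cof A$ in these frames are the matrix cofactors, $(\Cof A)_{ij}=\cof(\hat A)_{ij}$.

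Next I would compute at $p$. Since the dual frame $(e^i)$ and the frame $(f_j)$ are parallel at $p$, the expansion $\nabla_X A=\sum_{ij}\bigl(X A_{ij}\bigr)\,e^i\otimes f_j+\sum_{ij}A_{ij}\,\nabla_X(e^i\otimes f_j)$ has, at $p$, components $(\nabla_X A)_{ij}(p)=X(A_{ij})(p)$, the connection term dropping out. Because the frames are orthonormal, the induced inner product on $E^*\otimes F$ is the Euclidean pairing of components, so
\[
\IP{\Cof A}{\nabla_X A}_{E,F}(p)=\sum_{i,j}\cof(\hat A)_{ij}(p)\,X(A_{ij})(p).
\]
On the other hand $d(\Det A)(X)(p)=X(\det\hat A)(p)=\sum_{i,j}\frac{\partial \det\hat A}{\partial A_{ij}}(p)\,X(A_{ij})(p)$, and Jacobi's formula $\partial(\det\hat A)/\partial A_{ij}=\cof(\hat A)_{ij}$ identifies the two sums. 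As $p$ is arbitrary, this proves the lemma.

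The step I expect to demand the most care is the frame construction together with the verification that $(\nabla_X A)_{ij}(p)=X(A_{ij})(p)$, i.e. the precise sense in which metric compatibility makes the connection disappear at $p$; everything else is the classical determinant-derivative identity. As a more invariant alternative one could avoid frames entirely: differentiate the defining relation $\extp^d A(\omega_E)=(\Det A)\,\omega_F$ for the parallel unit volume sections $\omega_E\in\Gamma(\extp^d E)$ and $\omega_F\in\Gamma(\extp^d F)$, use $\nabla\omega_E=\nabla\omega_F=0$ together with the Leibniz rule $\nabla_X(\extp^d A)=\sum_{k} A\wedge\cdots\wedge\nabla_X A\wedge\cdots\wedge A$, and then recognize the resulting $(d-1)$-fold wedge paired against $\nabla_X A$ as the cofactor inner product via \defref{def:intrinsic_cof}. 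The bookkeeping in this last identification is exactly what the pointwise frame computation organizes automatically, which is why I would carry out the frame-based argument as the primary proof.
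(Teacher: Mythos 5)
Your proof is correct, and it takes a genuinely different route from the one in the paper. The paper's proof is intrinsic: it fixes an \emph{arbitrary} positive orthonormal frame $(e_i)$ of $E$, writes $\Det A=\star^d_F\brk{Ae_1\wedge\dots\wedge Ae_d}$, differentiates by the Leibniz rule, and then invokes two consequences of metric compatibility --- the commutation of the Hodge star with covariant differentiation, Eq.~\eqref{eq:Hodge_star_cov_diff_commute}, and the vanishing of $\nabla_V(e_1\wedge\dots\wedge e_d)$ --- to discard the terms containing $\nabla_V e_i$; the surviving sum is then identified with $\IP{\Cof A}{\nabla_V A}_{E,F}$ by Hodge-star manipulations of $(d-1)$-fold wedge products. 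You instead purchase all of this at once by a better choice of gauge: orthonormal frames of $E$ and $F$, obtained by radial parallel transport, which are parallel at the chosen point $p$. In such frames $(\nabla_X A)_{ij}(p)=X(A_{ij})(p)$, the induced inner product on $E^*\otimes F$ is the Frobenius pairing of component matrices, and the intrinsic $\Det$ and $\Cof$ coincide pointwise with their matrix counterparts (Eq.~\eqref{eq:det_is_action_cubes} and the remark following \defref{def:intrinsic_cof}), so the lemma collapses to Jacobi's formula $\partial(\det\hat A)/\partial A_{ij}=\cof(\hat A)_{ij}$. In both arguments metricity is the crucial hypothesis, but it enters differently: for you, through the existence of orthonormal frames parallel at a point; for the paper, through the parallelism of $\star$ and of the unit-norm section $e_1\wedge\dots\wedge e_d$ of $\Lambda_d(E)$. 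What your route buys is elementarity --- the only computation left is classical matrix calculus; what it costs is the radial-gauge construction, whose smoothness at the center (smooth dependence of linear ODE solutions on parameters, including the degenerate ray) is standard but deserves the word of justification you only partially give, plus a chart on $\M$, which the paper's gauge-free argument never needs. Finally, the ``invariant alternative'' you sketch at the end --- differentiating the action of $\extp^d A$ on parallel unit volume sections and recognizing the cofactor through $\star^{d-1}_F$ --- is essentially the paper's actual proof, so your assessment that both organizations of the computation are available and equivalent is exactly right.
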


The proof is given in Appendix~\ref{app:deriv_det_bundle}.

\subsection{The coderivative for vector-valued forms}
\label{sec:coderivative}


Let $(\M,\g)$ be a $d$-dimensional oriented Riemannian manifold.
Let $E$ be a vector bundle over $\M$ (of arbitrary rank $n$), endowed with a Riemannian metric $\h$, and a metrically-consistent affine connection $\nabla^E$. That is, for every pair of sections $\xi,\eta\in\Gamma(E)$, and every vector field $X\in\Gamma(T\M)$,
\[
X(\h(\xi,\eta)) = \h(\nabla^E_X\xi,\eta) + \h(\xi,\nabla^E_X\eta).
\]

We denote by $\Omega^1(\M;E) = \Gamma(T^*\M\otimes E)$ the space of $1$-forms on $\M$ with values in $E$.
For a section $\xi$ of $E$, its covariant derivative 
\[
\nabla^E\xi : X \mapsto \nabla_X^E\xi
\]
is an element of $\Omega^1(\M;E)$.
Finally, the metrics on $\M$ and $E$ induce a metric on $\Omega^1(\M;E)$, denoted $\innerp{\cdot,\cdot}_{\g,\h}$. 
With that, we recall the definition of the coderivative for vector-valued forms:

\begin{definition}
\label{def:coderivative}
The coderivative,
\[
\delta_{\nabla^E}: \Omega^1(\M;E) \to  \Omega^{0}(\M;E) \simeq \Gamma(E)
\]
is the adjoint of the connection $\nabla^E:\Gamma(E)\to \Omega^1(\M;E)$ with respect to the metric $\innerp{\cdot,\cdot}_{\g,\h}$.
That is, 
\[
\int_{\MOne} \innerp{\sigma,\delta_{\nabla^E} \rho}_{\g,\h}\,d\Volume = \int_{\MOne} \innerp{\nabla^E \sigma,\rho}_{\g,\h}\,d\Volume,
\]
for all $\rho\in\Omega^1(\M;E)$ and compactly-supported $\sigma\in \Gamma(E)$.
\end{definition}

The coderivative of a vector-valued form has a well-known explicit formula.
Let $\omega \in \Omega^1(\M;E)$. Given an orthonormal frame $E_i$ for $T\M$, $\delta_{\nabla^E}$ is given by
\beq
\delta_{\nabla^E}\omega  =-\sum_{i=1}^d (\nabla_{E_i} \omega)(E_i)=-\trg(\nabla \omega),
\label{eq:codiv_formula1}
\eeq
where $\nabla \omega$ is the connection induced  on $T^*\M \otimes E$ by the Levi-Civita connection on $\M$ and $\nabla^E$ (see e.g. \cite[Lemma 1.20]{EL83} for a proof). 

We will use the coderivative in the following setting: 
Let $f:\MOne\to\MTwo$ be smooth. Its differential $df$ can be viewed as a vector-valued form 
\[
df\in \Gamma(T^*\MOne\otimes f^*T\MTwo)\simeq \Omega^1(\MOne;f^*T\MTwo). 
\]
Then, $\Cof df  \in \Omega^1(\MOne;f^*T\MTwo)$ is of the same type as $df$. Hence, $\delta_{\nabla^{f^*T\MTwo}}  \Cof df $ is well-defined according to \defref{def:coderivative}, with $E=f^*T\MTwo$.

\section{The Piola transform approach}
\label{sec:elastic_approach}

In this section we present a proof of the Riemannian Piola identity \eqref{eq:Piola_id_general_strong} in the spirit of the continuum-mechanics approach briefly reviewed in the Introduction. It is based on the property of the cofactor established in \propref{prop:det_cof_bundle}. The crux of the proof is pulling back integrals from the target manifold $\MTwo$ to the source manifold $\MOne$ via the map $f$. 

A limitation of this derivation is that it only works for local diffeomorphisms. 
This is however not a severe limitation: every second-order equation satisfied by all local diffeomorphisms extends automatically to smooth maps. This follows from the facts that a $k$th-order differential equation for $f$ can be viewed as an algebraic equation for its $k$-jet \cite{Sau89}, and that jets of invertible maps are dense in the space of jets.

\subsection{The Euclidean case}

\begin{proposition}
\label{prop:Cof_orthog_Euc}
Let $\Omega_1, \Omega_2 \subseteq \R^d$ be open and bounded domains with a $C^2$-boundary, and let  $f:\bar \Omega_1 \to \bar \Omega_2$ be a $C^2$-diffeomorphism. 
For every $1 \le i \le d$,
\[
\int_{\Omega_1}  \div \brk{ \brk{\cof \nabla f}^T e_i } dV=0,
\]
where $(\cof \nabla f)^T e_i$ is the $i$-th row of the cofactor matrix $\cof \nabla f$ and $dV$ is the standard volume form in $\R^d$.
\end{proposition}

\begin{proof}
Without loss of generality, we may assume that $f$ is orientation-preserving.
We denote by $dS_1$ and $dS_2$ the surface forms of $\pl \Omega_1$ and $\pl \Omega_2$. For every $1\le i\le d$,
\beq
\label{eq:main_elastic_i_Euc}
\begin{split}
0&=\int_{\Omega_2} \div e_i \, dV \\
&= \int_{\pl \Omega_2}\IP{e_i}{\nu_2} \,dS_2 \\
&= \int_{\pl \Omega_1} \IP{e_i}{\nu_2 \circ f} \,f^*dS_2 \\
&= \int_{\pl \Omega_1} \IP{e_i }{\Det \brk{df|_{T\pl \Omega_1 }}  (\nu_2 \circ f)} \,dS_1 \\
&= \int_{\pl \Omega_1} \IP{e_i }{\cof \nabla f (\nu_1) } \,dS_1 \\
&= \int_{\pl \Omega_1} \IP{(\cof \nabla f)^T (e_i )}{ \nu_1 } \,dS_1  \\
&= \int_{\Omega_1}  \div \brk{ \brk{\cof \nabla f}^T e_i } \,dV.
\end{split}
\eeq
The first equality holds because $e_i$ is divergence-free. 
The passage to the second line follows from the divergence theorem. The passage to the third line is obtained by a pullback (change of variables). The passage to the fourth line results from the relation 
$f^*dS_2 = \Det (df|_{T\pl \Omega_1})\, dS_1$.
The passage to the fifth line follows from \propref{prop:det_cof_bundle}. Finally, the last equality is obtained by another application of the divergence theorem. 
\end{proof}

\begin{corollary}
\label{cor:euc_piola}
With the same notation as above, let $f:\bar \Omega_1 \to \bar \Omega_2$ be a $C^2$ local diffeomorphism.  Then, the Euclidean Piola identity \eqref{eq:div_cof_grad_classic} holds.
\end{corollary}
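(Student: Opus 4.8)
The plan is to upgrade the integral identity of \propref{prop:Cof_orthog_Euc} to a pointwise one by a standard localization argument. First I would observe that the $i$-th component of the vector field $\div \cof \nabla f$ is precisely $\div\brk{\brk{\cof\nabla f}^T e_i}$, the quantity appearing in \propref{prop:Cof_orthog_Euc}; moreover, since $f\in C^2$, its gradient $\nabla f$ is $C^1$, the cofactor is a polynomial in the entries of $\nabla f$ and hence $C^1$, so each component of $\div \cof \nabla f$ is a continuous function on $\Omega_1$. Thus it suffices to show that this continuous function vanishes at every point.

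Second, I would localize. Fix $x_0\in\Omega_1$ and $1\le i\le d$. Because $f$ is a local diffeomorphism, $\nabla f(x_0)$ is invertible, and by the inverse function theorem there is $r>0$ such that $f$ restricts to a $C^2$-diffeomorphism of the closed ball $\bar B_r(x_0)$ onto its image. Since $\partial B_r(x_0)$ is smooth and $f$ is a $C^2$-embedding on it, the image $f\brk{B_r(x_0)}$ is a bounded domain with $C^2$-boundary. I can therefore apply \propref{prop:Cof_orthog_Euc} with $\Omega_1$ replaced by $B_r(x_0)$ and $\Omega_2$ by $f\brk{B_r(x_0)}$ (a constant sign of $\det\nabla f$ on the connected ball makes the orientation reduction in that proof legitimate), obtaining
\[
\int_{B_\rho(x_0)} \brk{\div \cof\nabla f}_i \,dV = \int_{B_\rho(x_0)} \div\brk{\brk{\cof\nabla f}^T e_i}\,dV = 0
\]
for every $\rho\le r$.

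Finally, since $\brk{\div\cof\nabla f}_i$ is continuous and its integral over every ball $B_\rho(x_0)$ with $\rho\le r$ vanishes, dividing by $\Vol\brk{B_\rho(x_0)}$ and letting $\rho\to0$ (Lebesgue differentiation, or simply the continuity of the integrand) gives $\brk{\div\cof\nabla f}_i(x_0)=0$. As $x_0\in\Omega_1$ and $i$ are arbitrary, $\div\cof\nabla f\equiv 0$, which is \eqref{eq:div_cof_grad_classic}. The only point requiring care — and the main obstacle — is the localization step: a local diffeomorphism need not be injective on a prescribed ball, so one must invoke the inverse function theorem to pass to a small enough ball on which $f$ is a genuine diffeomorphism onto a domain of the $C^2$ regularity demanded by \propref{prop:Cof_orthog_Euc}. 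The rest is the elementary passage from a vanishing integral over all small balls to the vanishing of a continuous integrand.
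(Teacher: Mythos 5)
Your proposal is correct and takes essentially the same approach as the paper's proof: localize via the inverse function theorem so that $f$ is a genuine diffeomorphism on a small ball, apply \propref{prop:Cof_orthog_Euc} on shrinking balls around the point, and conclude by the standard differentiation argument for a continuous integrand with vanishing averages. The paper compresses this into three sentences; you have simply made explicit the details (continuity of $\div\cof\nabla f$, the $C^2$ regularity of the image boundary, and the orientation reduction) that it leaves implicit.
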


\begin{proof}
Since the claim is local, we may assume that $f$ is a diffeomorphism. Let $p \in \Omega_1$,  
and apply \propref{prop:Cof_orthog_Euc} on shrinking balls around $p$. The integrand vanishes by a standard differentiation argument.
\end{proof}

\subsection{The Riemannian case}
\label{sec:mechanics_Riemannian}

It is not immediately obvious how to generalize \propref{prop:Cof_orthog_Euc} to maps between Riemannian manifolds.
The first equality in \eqref{eq:main_elastic_i_Euc} is due to the ``constantness" of the frame field $\{e_i\}$, which in turn implies the vanishing of the surface integral of the unit normal to the boundary.  On a non-Euclidean manifold, there is no such thing as a parallel frame-field, or any canonical notion of a divergence-free frame field. 

It turns out, however, that the existence of a parallel frame field is not really needed for the  proof of \propref{prop:Cof_orthog_Euc}. 
A more careful examination of the proof reveals the occurrence of a cancellation, which as a result of, the particular properties of the frame field $\{e_i\}$ are immaterial. We prove the Riemannian Piola identity in two steps, the first of which was proved in \cite{MH83}:

\begin{proposition}[Marsden and Hughes]
\label{prop:MH83}
Let $f:\MOne \to \MTwo$ be a local diffeomorphism between oriented, compact, $d$-dimensional Riemannian manifolds with boundaries.  
For any vector field $X\in\Gamma(T\MTwo)$,
\[
\div \piola(X) =  \brk{(\div X)\circ f} \, \Det df,
\]
where $\piola(X)\in \Gamma(T\MOne)$ is defined by \eqref{eq:Marsden_eq_3_i}.
\end{proposition}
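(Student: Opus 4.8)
The plan is to prove the identity $\div\piola(X) = ((\div X)\circ f)\,\Det df$ by pulling back a flux integral from $\MTwo$ to $\MOne$ via $f$, exactly mimicking the cancellation structure already exhibited in the Euclidean \propref{prop:Cof_orthog_Euc}. The key observation is that $\piola(X) = (\Cof df)^T(f^*X)$ is characterized by its flux through the boundary of every subdomain, so it suffices to show that the boundary fluxes of the two sides agree on arbitrary small domains and then localize by shrinking.

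The steps, in order, would be as follows. First I would fix an arbitrary compact subdomain $U\subseteq\MOne$ with smooth boundary such that $f|_U$ is an orientation-preserving diffeomorphism onto $f(U)\subseteq\MTwo$ (possible since $f$ is a local diffeomorphism), and let $\nu_1,\nu_2$ denote the outward unit normals on $\partial U$ and $\partial f(U)$. Applying the divergence theorem on $f(U)$ to the vector field $X$ gives
\beq
\int_{f(U)} \div X \,\dVolTwo = \int_{\partial f(U)} \IP{X}{\nu_2}\,dS_2,
\label{eq:plan_step1}
\eeq
where $dS_2$ is the surface form of $\partial f(U)$. Next I would change variables in both integrals via $f$: the left side becomes $\int_U ((\div X)\circ f)\,\Det df\,\dVolOne$ using $\Det df = f^\star\dVolTwo/\dVolOne$, and the right side becomes a boundary integral over $\partial U$ using $f^\star dS_2 = \Det(df|_{T\partial U})\,dS_1$. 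The crucial third step is to rewrite the transformed boundary integrand using \propref{prop:det_cof_bundle}, which states $\Cof df(\nu_1) = \Det(df|_{T\partial U})\,f^*\nu_2$; this converts $\IP{X\circ f}{\nu_2\circ f}\,\Det(df|_{T\partial U})$ into $\IP{f^*X}{\Cof df(\nu_1)} = \IP{(\Cof df)^T(f^*X)}{\nu_1} = \IP{\piola(X)}{\nu_1}$. A final application of the divergence theorem on $U$ then turns this boundary integral into $\int_U \div\piola(X)\,\dVolOne$, yielding
\beq
\int_U \div\piola(X)\,\dVolOne = \int_U \brk{(\div X)\circ f}\,\Det df\,\dVolOne.
\label{eq:plan_step2}
\eeq
Since $U$ is arbitrary, a standard localization (shrinking $U$ to a point and dividing by volume) gives the pointwise identity.

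The main obstacle I anticipate is bookkeeping the orientations and the inner products correctly through the pullback: one must check that $f^*\nu_2$ and $f^*X$ live in $f^*T\MTwo$ with the metric $f^*\gTwo$, and that \propref{prop:det_cof_bundle} is being applied with the Stokes orientation on $\partial U$ consistent with \lemref{lem:naturaliy_Stokes_orientation}, so that the identity $\Cof df(\nu_1)=\Det(df|_{T\partial U})\,f^*\nu_2$ holds with the correct sign. The inner product $\IP{\cdot}{\cdot}$ appearing in the transposition step is the one induced by $\gOne$ and $f^*\gTwo$, and verifying that the adjoint $(\Cof df)^T$ is taken with respect to exactly these metrics is where a sign or metric error would most easily creep in; everything else is a routine chain of divergence theorems and changes of variables.
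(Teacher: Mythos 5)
Your proposal is correct and follows essentially the same route as the paper's proof: the divergence theorem on the target, pullback of the boundary integral using $f^\star dS_2 = \Det(df|_{T\partial U})\,dS_1$, conversion via \propref{prop:det_cof_bundle} and the transpose to $\IP{\piola(X)}{\nu_1}$, a second application of the divergence theorem, and localization by shrinking the domain. The only cosmetic difference is that the paper reduces at the outset to $f$ being a global (WLOG orientation-preserving) diffeomorphism and then notes the integral identity holds on every open subset, whereas you work on arbitrary subdomains $U$ from the start; the orientation and metric bookkeeping you flag is exactly what \lemref{lem:naturaliy_Stokes_orientation} and \eqref{eq:det_bdry} handle in the paper.
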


\begin{proof}
Since the claim is local, we may assume that $f$ is a diffeomorphism. 

Without loss of generality, we may assume that $f$ is orientation-preserving (otherwise, reverse the orientation of, say, $\MOne$).
By \lemref{lem:naturaliy_Stokes_orientation}, $f|_{\pl \MOne}:\pl \MOne \to \pl \MTwo$ is an orientation-preserving (with respect to the induced Stokes orientations) diffeomorphism. 
Denote by $dS_1$ and $dS_2$ the surface volume forms on $\MOne$ and $\MTwo$ induced by $\dVolOne$ and $\dVolTwo$.
Note that 
\beq
(f|_{\pl \MOne})^*dS_2=\Det \brk{d\brk{f|_{\pl \MOne}}}dS_1=\Det \brk{\brk{df}|_{T\partial\MOne}}dS_1.
\label{eq:det_bdry}
\eeq
Replicating the steps in \eqref{eq:main_elastic_i_Euc} 
\beq
\label{eq:main_elatic}
\begin{split}
\int_{\MTwo} \div X \, \dVolTwo &= \int_{\pl \MTwo}\IP{X}{\nu_2} \, dS_2\\
&=\int_{\pl \MOne} \IP{f^*X}{f^*\nu_2} \, f^*dS_2 \\
&=\int_{\pl \MOne} \IP{f^*X}{\Det \brk{df|_{T\partial\MOne}}  (f^*\nu_2)} dS_1 \\
&=\int_{\pl \MOne} \IP{f^*X}{\Cof df (\nu_1) } \, dS_1\\
&=\int_{\pl \MOne} \IP{\piola(X)}{ \nu_1 } \,dS_1 \\
&=\int_{\MOne} \div \piola(X) \,\dVolOne.
\end{split}
\eeq
where in the passage to the  third line we used \eqref{eq:det_bdry}, in the passage to the fourth line we used \propref{prop:det_cof_bundle}, in the passage to the fifth line  we used the definitions of the transpose,  and in the passage to the last line we used once again the divergence theorem. Pulling back the left-hand side, we obtain that
\beq
\label{eq:orthogonality_cof_a_1}
\int_{\MOne} ((\div X) \circ f) \, \Det df\,\dVolOne = \int_{\MOne} \div \piola(X) \,\dVolOne,
\eeq
Since this identity holds for $\MOne$ replaced by any open subset, the integrands are equal, which completes the proof.
\end{proof}

\begin{proposition}
\label{prop:new_prop}
Let $f:\MOne \to \MTwo$ be a smooth map between oriented, compact, $d$-dimensional Riemannian manifolds with boundaries.  
For any vector field $X\in\Gamma(T\MTwo)$,
\beq
\label{eq:Marsden2i}
\div \piola(X)= ((\div X)\circ f)\, \Det df- \IP{  f^*X}{\delta_{\nabla^{f^*T\MTwo}}\Cof df}.
\eeq
\end{proposition}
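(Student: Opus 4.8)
The plan is to establish the identity \eqref{eq:Marsden2i} \emph{pointwise}, by a direct computation of $\div\piola(X)$ at an arbitrary $p\in\MOne$. Since the claim is tensorial, I would fix $p$ and work in a local orthonormal frame $\{E_i\}_{i=1}^d$ for $T\MOne$ that is geodesic at $p$, i.e. $\nabla_{E_j}E_i(p)=0$ for the Levi--Civita connection on $\MOne$. Writing $\piola(X)=(\Cof df)^T(f^*X)$ as in \eqref{eq:Marsden_eq_3_i} and using $\div Y=\sum_i\gOne(\nabla_{E_i}Y,E_i)$, the first step is to differentiate the scalar functions $\gOne(\piola(X),E_i)=\gTwo(f^*X,\Cof df\,(E_i))$ and invoke metric-compatibility of the connections on the two sides of this adjoint relation. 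Because the frame is geodesic at $p$, every term involving $\nabla_{E_i}E_i$ drops out, leaving the clean splitting
\[
\div\piola(X)=\sum_i\gTwo\!\big(\nabla_{E_i}(f^*X),\Cof df\,(E_i)\big)+\sum_i\gTwo\!\big(f^*X,(\nabla_{E_i}\Cof df)(E_i)\big)
\]
at $p$, where $\nabla$ denotes the induced connection on $f^*T\MTwo$ (and on $T^*\MOne\otimes f^*T\MTwo$).

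The second sum is immediately the coderivative term: by the explicit formula \eqref{eq:codiv_formula1}, $\sum_i(\nabla_{E_i}\Cof df)(E_i)=-\delta_{\nabla^{f^*T\MTwo}}\Cof df$, so this sum equals $-\IP{f^*X}{\delta_{\nabla^{f^*T\MTwo}}\Cof df}$, matching the last term of \eqref{eq:Marsden2i}. It therefore remains to identify the first sum with $((\div X)\circ f)\,\Det df$. For this I would use the defining property of the pullback connection, $\nabla_{E_i}(f^*X)=\nabla^{\MTwo}_{df(E_i)}X=(\nabla^{\MTwo}X)(df(E_i))$, so that, setting $L:=\nabla^{\MTwo}X\in\Hom(T_{f(p)}\MTwo,T_{f(p)}\MTwo)$, the first sum becomes the Hilbert--Schmidt pairing $\IP{L\circ df}{\Cof df}=\tr\big((\Cof df)^T\circ L\circ df\big)$.

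The computation then reduces to a purely linear-algebraic identity: for $A\in\Hom(V,W)$ and $L\in\Hom(W,W)$ one has $\IP{L\circ A}{\Cof A}=\tr(L)\,\Det A$. I would prove this by cycling the trace across the two spaces, $\tr_V\big((\Cof A)^T L A\big)=\tr_W\big(A(\Cof A)^T L\big)$, and then applying the companion of the Laplace expansion \eqref{eq:cof_det_expansion}, namely $A\circ(\Cof A)^T=\Det A\,\id_W$, to collapse $A(\Cof A)^T$ to $\Det A\,\id_W$. Since $\tr(L)=\div X$ and $\Det df$ is evaluated at $p$, the first sum equals $((\div X)\circ f)\,\Det df$, and adding the two contributions yields \eqref{eq:Marsden2i}.

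I expect the main obstacle to be the bookkeeping in the linear-algebraic step: in particular, justifying the ``other-sided'' cofactor identity $A(\Cof A)^T=\Det A\,\id_W$ (as opposed to the relation $(\Cof A)^T A=\Det A\,\id_V$ recorded in \eqref{eq:cof_det_expansion}) between the two distinct inner-product spaces $V=T_p\MOne$ and $W=T_{f(p)}\MTwo$, and keeping the cyclic permutation of the trace consistent with the domains and codomains of the maps. The remaining care is to apply every product rule with the correct metric-compatible connection and to note that the geodesic-frame simplification is legitimate precisely because the resulting identity is pointwise and frame-independent; in particular, no invertibility of $df$ is used, so the statement holds for all smooth $f$.
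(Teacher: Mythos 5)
Your proof is correct, and while it shares the paper's overall skeleton---compute $\div \piola(X)$ in an orthonormal frame, split by the Leibniz rule into a term identified as $-\IP{f^*X}{\delta_{\nabla^{f^*T\MTwo}}\Cof df}$ via \eqref{eq:codiv_formula1}, plus a remaining term to be identified with $((\div X)\circ f)\,\Det df$---it diverges from the paper exactly at the step where it matters. The paper handles the remaining term by writing $(\Cof df)^T = \Det df\,(df)^{-1}$ (from \eqref{eq:cof_det_expansion}) and invoking the conjugation-invariance of the trace, $\tr(T^{-1}ST)=\tr(S)$; this only makes sense where $df$ is invertible, even though the proposition is stated for arbitrary smooth maps, so strictly speaking the paper's computation covers the general case only through the jet-density remark at the start of \secref{sec:elastic_approach}. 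Your route instead reduces that term to the pointwise linear-algebraic identity $\IP{L\circ A}{\Cof A}=\tr(L)\,\Det A$, proved by cycling the trace between the two spaces and collapsing $A\circ(\Cof A)^T=\Det A\,\id_W$. The ``other-sided'' Laplace expansion you flag as the main obstacle is indeed valid for every $A$, singular or not: apply \eqref{eq:cof_det_expansion} to $A^T\in\Hom(W,V)$ together with $\Cof(A^T)=(\Cof A)^T$ and $\Det(A^T)=\Det A$, or simply verify it for matrices in positively-oriented orthonormal bases, where it is the classical two-sided adjugate identity. Consequently your argument proves \eqref{eq:Marsden2i} pointwise for all smooth $f$, with no invertibility assumption and no density argument---a genuine strengthening of the paper's own derivation, matching the stated hypotheses. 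Your geodesic-frame bookkeeping is likewise a legitimate substitute for the paper's appeal to the commutation of the transpose with covariant differentiation; both yield the same splitting, so that part is only cosmetically different.
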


\begin{proof}
Let $e_i$ be an orthonormal frame for $T\MOne$. Writing the expression for the divergence with respect to an orthonormal frame,  and applying the Leibniz rule,
\beq
\label{eq:orthogonality_cof_a_2}
\begin{split}
\div \piola(X) 
&=\sum_{i=1}^d \IP{\nabla_{e_i}^{T\MOne} \brk{(\Cof df)^T (f^*X)}}{e_i} \\ 
&=\sum_{i=1}^d \IP{ \brk{\nabla_{e_i} \brk{\Cof df}^T} (f^*X)+ (\Cof df)^T \brk{\nabla_{e_i}^{f^*T\MTwo}(f^*X)}}{e_i} .
\end{split}
\eeq
Examining the first summand, 
\beq
\label{eq:orthogonality_cof_a_3}
\begin{split}
\sum_{i=1}^d \IP{ \brk{\nabla_{e_i} \brk{\Cof df}^T} (f^*X )}{e_i}&=\sum_{i=1}^d \IP{ \brk{\nabla_{e_i} \Cof df}^T (f^*X)}{e_i} \\
&=\sum_{i=1}^d \IP{  f^*X}{\brk{\nabla_{e_i} \Cof df} (e_i)}\\ 
&=- \IP{  f^*X}{\delta_{\nabla^{f^*T\MTwo}}\Cof df} ,
\end{split}
\eeq
where we used the fact that the transpose operator commutes with covariant derivative, and the expression \eqref{eq:codiv_formula1} for $\delta_{\nabla^{f^*T\MTwo}}$.

Examining the second summand in \eqref{eq:orthogonality_cof_a_2}, 
\beq
\label{eq:orthogonality_cof_a_4}
\begin{split}
\sum_{i=1}^d \IP{ (\Cof df)^T \brk{\nabla_{e_i}^{f^*T\MTwo}(f^*X)}}{e_i} &=\sum_{i=1}^d \IP{(\Cof df)^T f^*\brk{\nabla_{f_*(e_i)}^{T\MTwo}X }}{e_i} \\
&=\Det df \, \sum_{i=1}^d \IP{(df)^{-1} f^*\brk{\nabla_{f_*(e_i)}^{T\MTwo}X }}{e_i} \\
&= \Det df \, \sum_{i=1}^d \IP{(df)^{-1} \circ f^*\brk{\nabla^{T\MTwo}X } \circ df (e_i)}{e_i} \\
&= \Det df \, \tr \brk{ (df)^{-1} \circ f^*\brk{\nabla^{T\MTwo}X } \circ df } \\ 
&= \Det df \, \tr \brk{  f^*\brk{\nabla^{T\MTwo}X } } \\
&= ((\div X)\circ f)\, \Det df ,
\end{split} 
\eeq
where in the passage to the second line we used \eqref{eq:cof_det_expansion}, and in the passage to the fifth line we used the cyclic invariance of the trace: $\tr(T^{-1}ST)=\tr(S)$.

Equations \eqref{eq:orthogonality_cof_a_2}, \eqref{eq:orthogonality_cof_a_3} and \eqref{eq:orthogonality_cof_a_4} yield the desired result. 
\end{proof}

\begin{corollary}
Let $f:\MOne \to \MTwo$ be a local diffeomorphism between smooth, oriented, $d$-dimensional Riemannian manifolds with boundaries. 
Then the Riemannian Piola identity \eqref{eq:Piola_id_general_strong} holds.
\end{corollary}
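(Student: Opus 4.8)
The plan is to extract \eqref{eq:Piola_id_general_strong} by directly comparing the two expressions for $\div \piola(X)$ that have just been derived. Since $f$ is assumed to be a local diffeomorphism, both \propref{prop:MH83} and \propref{prop:new_prop} apply. The former gives
\[
\div \piola(X) = ((\div X)\circ f)\, \Det df,
\]
while the latter gives
\[
\div \piola(X) = ((\div X)\circ f)\, \Det df - \IP{f^*X}{\delta_{\nabla^{f^*T\MTwo}}\Cof df}.
\]
Subtracting, the terms $((\div X)\circ f)\, \Det df$ cancel, leaving
\[
\IP{f^*X}{\delta_{\nabla^{f^*T\MTwo}}\Cof df} = 0
\]
as an identity of functions on $\MOne$, valid for every vector field $X \in \Gamma(T\MTwo)$.

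The remaining step is to promote this orthogonality relation to the vanishing of $\delta_{\nabla^{f^*T\MTwo}}\Cof df$ itself. I would argue pointwise. Fix $p \in \MOne$; since $\delta_{\nabla^{f^*T\MTwo}}\Cof df$ is a section of $f^*T\MTwo$, its value at $p$ lies in $T_{f(p)}\MTwo$. The value of $f^*X$ at $p$ is $X_{f(p)}$, and by choosing $X$ to be any vector field on $\MTwo$ with a prescribed value at the single point $f(p)$, the vector $(f^*X)_p$ can be made to equal an arbitrary element of $T_{f(p)}\MTwo$. Hence $\delta_{\nabla^{f^*T\MTwo}}\Cof df$ is orthogonal at $p$ to every vector in $T_{f(p)}\MTwo$, and therefore vanishes at $p$. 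As $p$ is arbitrary, this establishes \eqref{eq:Piola_id_general_strong}.

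One technical point to address is that \propref{prop:MH83} was stated for compact manifolds, whereas the corollary does not assume compactness. Since the conclusion is the pointwise vanishing of a section, this is not a genuine obstruction: the claim is local, so one may restrict $f$ to a small compact neighborhood of each point on which it is a diffeomorphism onto its image (exactly as in the reduction used in \corref{cor:euc_piola}), apply the two propositions there, and obtain the vanishing at the center point. I expect this localization, together with the pointwise surjectivity of $X \mapsto (f^*X)_p$, to be the only steps requiring care; the heart of the argument is simply the algebraic cancellation between the two divergence formulas, which isolates $\delta_{\nabla^{f^*T\MTwo}}\Cof df$.
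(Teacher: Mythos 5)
Your proof is correct and follows essentially the same route as the paper: combine \propref{prop:MH83} and \propref{prop:new_prop}, cancel the common term to get $\IP{f^*X}{\delta_{\nabla^{f^*T\MTwo}}\Cof df}=0$ for every $X\in\Gamma(T\MTwo)$, and then deduce the vanishing of $\delta_{\nabla^{f^*T\MTwo}}\Cof df$. The only (minor) differences are in the last step and in bookkeeping: the paper concludes by taking $f^*X=\delta_{\nabla^{f^*T\MTwo}}\Cof df$, which uses the local-diffeomorphism hypothesis to realize the section as a pullback, whereas your pointwise extension argument avoids even that, and you additionally spell out the localization needed to reconcile the compactness assumption in the propositions with its absence in the corollary, a point the paper's two-line proof leaves implicit.
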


\begin{proof}
Combining \propref{prop:MH83} and \propref{prop:new_prop}, if follows that for every vector field $X\in\Gamma(T\MTwo)$,
\[
\IP{  f^*X}{\delta_{\nabla^{f^*T\MTwo}}\Cof df} = 0.
\]
Taking $f^*X= \delta_{\nabla^{f^*T\MTwo}}\Cof df$, the result follows.
\end{proof}

\section{Null-Lagrangian approach}
\label{sec:Null-Lagrangians_approach}

Let $\calX$ be a function space of smooth maps between two domains.   
A functional $E:\calX\to\R$ is a \emph{null-Lagrangian} if every smooth map is a critical point of $E$, with respect to smooth homotopies relative to the boundary. As a result, every function in $\calX$ satisfies the Euler-Lagrange equation corresponding to $E$.
In this section we prove the Piola identity by showing that it is the Euler-Lagrange equation of a null-Lagrangian. As in the previous section, we start by considering the Euclidean setting, and then generalize the treatment to mappings between Riemannian manifolds.  

\subsection{The Euclidean case}
\label{sec:Null_Euc}

We begin by following the treatment of Iwaniec \cite{Iwa14}.
\begin{lemma}[Iwaniec]
\label{lem:tech_Jac1}
Let $\Omega \subseteq \R^d$ be open and bounded, and let $f:\Omega \to \R^d$ be smooth. For every $i=1,\dots,d$,
\[
\label{lem:det_wedge_express}
\begin{split}
\det \nabla f \,dV&= \det \nabla f \, dx^1 \wedge dx^2 \dots \wedge dx^d \\
&= df^1 \wedge df^2 \dots \wedge df^d \\
&= (-1)^{i-1}\, d \big(df^{1} \wedge df^{2} \wedge \dots \wedge df^{i-1} \wedge  f^i  \wedge df^{i+1} \wedge \dots \wedge df^d \big).
\end{split}
\]
\end{lemma}

\begin{proof}
The first equality is just a rewriting of the unit volume element as a wedge product of the standard co-frame.
The last equality follows directly from the Leibniz rule for the exterior derivative and the fact $d^2=0$. As for the middle equality,
\[
\begin{split}
df^1 \wedge df^2 \dots \wedge df^d 
&=\pd{f^1}{x_{j_1}} \pd{f^2}{x_{j_2}} \dots \pd{f^d}{x_{j_d}}  \,   dx^{j_1}\wedge dx^{j_2} \dots \wedge dx^{j_d}\\
&= \sum_{\sigma \in S^d}\pd{f^1}{x_{\sigma(1)}} \pd{f^2}{x_{\sigma(2)}} \dots \pd{f^d}{x_{\sigma(d)}}  \,  dx^{\sigma(1)}\wedge dx^{\sigma(2)} \dots \wedge dx^{\sigma(d)}\\
&= \sum_{\sigma \in S^d} \pd{f^1}{x_{\sigma(1)}} \pd{f^2}{x_{\sigma(2)}} \dots \pd{f^d}{x_{\sigma(d)}} \sgn(\sigma)  \, dx^1 \wedge dx^2 \dots \wedge dx^d \\
&= \det \nabla f \, dx^1 \wedge dx^2 \dots \wedge dx^d.
\end{split}
\]
\end{proof}

\begin{corollary}
\label{cor:det_depend_only_bdry}
The integral $\int_{\Omega} \det \nabla f\, dV$ only depends on the value of $f$ on $\partial \Omega$.
\end{corollary}

\begin{proof}
Suppose that $f|_{\partial \Omega}=g|_{\partial \Omega}$.
Using a telescopic sum, 
\[
\begin{split}
(\det \nabla f- \det  \nabla g)\, dV &= df^1 \wedge df^2 \dots \wedge df^n-dg^1 \wedge dg^2 \dots \wedge dg^d \\
&= \sum_{i=1}^d dg^1 \wedge \dots \wedge dg^{i-1} \wedge d(f^i-g^i) \wedge df^{i+1} \wedge \dots \wedge df^d  \\
&= \sum_{i=1}^d  (-1)^{i-1} d\brk{dg^1 \wedge \dots \wedge dg^{i-1} \wedge (f^i-g^i) \wedge df^{i+1} \wedge \dots \wedge df^d}.
\end{split}
\]
Hence,
\[
\begin{split}
\int_{\Omega} (\det  \nabla f - \det \nabla g)\, dV &= \sum_{i=1}^d (-1)^{i-1} \int_{\Omega} d\brk{ dg^1 \wedge \dots \wedge dg^{i-1} \wedge (f^i-g^i) \wedge df^{i+1} \wedge \dots \wedge df^d} \\
&=  \sum_{i=1}^d (-1)^{i-1} \int_{\partial \Omega} dg^1 \wedge \dots \wedge dg^{i-1} \wedge (f^i-g^i) \wedge df^{i+1} \wedge \dots \wedge df^d=0,
\end{split}
\]
where the last equality follows from the assumption that $f|_{\partial \Omega}=g|_{\partial \Omega}$.

\end{proof}

\corref{cor:det_depend_only_bdry} immediately implies the following:

\begin{corollary}
\label{cor:det_Null_Euc}
The functional 
\[
E(f)=\int_{\Omega} \det \nabla f\, dV
\]
is a null-Lagrangian.
\end{corollary}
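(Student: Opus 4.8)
The goal is to prove that the functional $E(f)=\int_{\Omega} \det \nabla f\, dV$ is a null-Lagrangian, meaning every smooth map $f:\Omega\to\R^d$ is a critical point of $E$ with respect to smooth variations that vanish on $\partial\Omega$. The plan is to deduce this directly from \corref{cor:det_depend_only_bdry}, which asserts that the value $E(f)$ depends only on the restriction $f|_{\partial\Omega}$.

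The argument is essentially a one-line consequence of the preceding corollary, so the main task is to articulate why boundary-dependence of the functional is exactly the statement that every map is critical. Concretely, I would take an arbitrary smooth map $f$ and an arbitrary smooth homotopy $f_t$ with $f_0=f$ and $f_t|_{\partial\Omega}=f|_{\partial\Omega}$ for all $t$ in some interval around $0$ (equivalently, a variation $\phi=\tfrac{d}{dt}\big|_{t=0}f_t$ that is compactly supported in $\Omega$, or at least vanishes on $\partial\Omega$). Since each $f_t$ agrees with $f$ on the boundary, \corref{cor:det_depend_only_bdry} gives $E(f_t)=E(f)$ for every $t$. Therefore the map $t\mapsto E(f_t)$ is constant, and in particular
\[
\deriv{}{t}\Big|_{t=0} E(f_t) = 0.
\]
As this holds for every admissible homotopy, $f$ is a critical point of $E$, and since $f$ was arbitrary, $E$ is a null-Lagrangian by definition.

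There is no real obstacle here; the entire content of the claim was already packaged into \corref{cor:det_depend_only_bdry}. The only point deserving a sentence of care is the logical equivalence between the two formulations of ``null-Lagrangian'': the definition phrases it in terms of criticality under boundary-fixing homotopies, whereas the corollary gives the stronger fact that $E$ is literally \emph{constant} along any such homotopy. I would simply note that constancy trivially implies vanishing of the first variation, so the corollary is more than enough. One may also remark, as a reality check, that the associated Euler-Lagrange operator is precisely $\div\cof\nabla f$, so that the Euclidean Piola identity \eqref{eq:div_cof_grad_classic} reappears as the statement that every map satisfies the Euler-Lagrange equation of this null-Lagrangian.
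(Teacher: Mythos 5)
Your proof is correct and is exactly the paper's argument: the paper states this corollary as an immediate consequence of \corref{cor:det_depend_only_bdry}, and your write-up simply makes explicit the (trivial) step that constancy of $E$ along any boundary-fixing homotopy implies vanishing of the first variation. Nothing is missing; this matches the paper's reasoning (and its later Riemannian analogue, \corref{cor:Null-Lag1}) verbatim in spirit.
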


\begin{proposition}
\label{lem:E-l_det_Euc}
The Euler-Lagrange equation of the functional
\beq
\label{eq:Jacobian_functional_Euc}
E(f)=\int_{\Omega} \det  \nabla f \, dV
\eeq
is $\div \cof \nabla f = 0$.
\end{proposition}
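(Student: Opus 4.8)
The plan is to compute the first variation of $E$ directly and read off the Euler--Lagrange equation. Since $E$ is a null-Lagrangian (\corref{cor:det_Null_Euc}), the statement amounts only to verifying that the vanishing of the first variation is precisely the condition $\div \cof \nabla f = 0$; one need not invoke criticality, but merely compute the variation. I fix a smooth, compactly-supported perturbation $\phi:\Omega \to \R^d$, consider the family $f_t = f + t\phi$, and compute $\frac{d}{dt}\big|_{t=0} E(f_t)$, which by definition produces the Euler--Lagrange equation through the requirement that it vanish for all such $\phi$.

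The key ingredient is the elementary Euclidean (matrix-level) instance of \lemref{lem:Cofactor_grad_Determinant_bundle}: the cofactor is the derivative of the determinant. Concretely, for a matrix $A$ and a direction $H$ one has $\frac{d}{dt}\big|_{t=0}\det(A+tH) = \sum_{i,j}(\cof A)_{ij}H_{ij}$, since with the paper's convention (cofactor $=$ transpose of the adjugate) the entries of $\cof A$ are exactly the partial derivatives $\partial \det/\partial A_{ij}$. Applying this with $A=\nabla f$ and $H=\nabla\phi$, so that $H_{ij}=\partial_j\phi^i$, and differentiating under the integral sign gives
\[
\frac{d}{dt}\Big|_{t=0} E(f_t) = \int_{\Omega} \sum_{i,j}(\cof\nabla f)_{ij}\,\partial_j\phi^i\,dV.
\]

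Next I would integrate by parts in each variable $x_j$. As $\phi$ is compactly supported in $\Omega$, all boundary terms vanish, and one obtains
\[
\frac{d}{dt}\Big|_{t=0} E(f_t) = -\int_{\Omega} \sum_{i}\Big(\sum_j \partial_j (\cof\nabla f)_{ij}\Big)\phi^i\,dV = -\int_{\Omega} \IP{\div\cof\nabla f}{\phi}\,dV,
\]
where the divergence of the matrix $\cof\nabla f$ is taken row-by-row, i.e.\ the $i$-th component of $\div\cof\nabla f$ is $\sum_j\partial_j(\cof\nabla f)_{ij}$. By the fundamental lemma of the calculus of variations, vanishing of this expression for every compactly-supported $\phi$ forces $\div\cof\nabla f = 0$, which is exactly the asserted Euler--Lagrange equation.

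There is essentially no substantive obstacle here: the entire content sits in the identity $\partial\det/\partial A_{ij} = (\cof A)_{ij}$, and the only points requiring care are bookkeeping ones. One should confirm that the convention ``$\cof$ equals the transpose of the adjugate'' is precisely what makes the differentiated index $j$ the column index, so that the integration by parts produces the divergence acting row-by-row; and one should note that differentiation under the integral is immediate, since $\det$ is polynomial in the matrix entries and $\phi$ is compactly supported. As an alternative to the matrix calculus, one could instead differentiate the wedge-product expression $\det\nabla f\,dV = df^1\wedge\cdots\wedge df^d$ of \lemref{lem:tech_Jac1} via the Leibniz rule for the exterior derivative, but the direct first-variation computation above is the most transparent route to the stated form.
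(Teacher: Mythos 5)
Your proposal is correct and takes essentially the same route as the paper's own proof: a first variation along $f_t = f + t\phi$, the identity that the cofactor is the derivative of the determinant, and an integration by parts to move the derivative onto $\cof \nabla f$. You are in fact slightly more careful than the paper, which takes the perturbation merely smooth (rather than compactly supported) and leaves the integration by parts and the appeal to the fundamental lemma of the calculus of variations implicit.
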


\begin{proof}
Let $f_t=f+tw$ for some smooth vector field $w:\R^d \to \R^d$. Then,
\[
\begin{split}
\left. \frac{d}{dt} \right|_{t=0} E(f_t) &= \int_{\Omega} \left. \frac{d}{dt} \right|_{t=0}  \det \nabla f_t \, dV \\
&=\int_{\Omega} \left. \frac{d}{dt} \right|_{t=0}  \det (\nabla f+t \, \nabla w) \, dV \\
&=\int_{\Omega} \langle \cof  \nabla f, \nabla w \rangle \, dV  \\
&=- \int_{\Omega} \langle \text{div} \cof \nabla f,w \rangle \, dV,
\end{split}
 \]
where we used the well-known facts that 
the cofactor is the gradient of the determinant and that the divergence is the (negative) adjoint of the gradient.
\end{proof}

\subsection{The Riemannian case}

We now turn to the Riemannian case. Let $\MOne$ and $\MTwo$ be smooth, oriented, $d$-dimensional  Riemannian manifolds, and set
\beq
\label{eq:Jacobian_functional}
E(f)=\int_{\MOne} f^* \dVolTwo=\int_{\MOne} \Det df\, \dVolOne.
\eeq

We start by observing that there is an obstacle in generalizing the approach used in \secref{sec:Null_Euc}:  \corref{cor:det_depend_only_bdry} does not hold 
for mappings between arbitrary Riemannian manifolds. 
Consider the following counter-example: 
Let $\MOne$ be a hemisphere and let $\MTwo$ be a sphere. Let $f_1,f_2:\MOne \to \MTwo$ be two embeddings of the hemisphere in the sphere coinciding on the boundary; $f_1$ maps $\MOne$ onto the upper hemisphere of $\MTwo$ and $f_2$ maps $\MOne$ onto the lower hemisphere of $\MTwo$. Endow $\MTwo$ with a Riemannian metric, such that the volumes of the two hemispheres are different. Then 
\[
\int_{\MOne} \Det df_1\,\dVolOne \neq \int_{\MOne} \Det df_2\,\dVolOne
\]
even though $f_1|_{\partial \MOne}  =f_2|_{\partial \MOne}$.
This shows that the "decomposition" technique used in \lemref{lem:tech_Jac1} to prove that the integral of the determinant is a 
null-Lagrangian does not work for arbitrary manifolds. The obstruction to such a generalization turns out to be of topological nature.

The statement that $E(f)$ depends only on $f|_{\pl \MOne}$ is strictly stronger than the statement that it is invariant under a homotopy relative to $\pl \MOne$. The relevant generalization of \corref{cor:det_depend_only_bdry} for mappings between manifolds is the following:

\begin{lemma}
\label{lem:integrals_exact_forms_depend_only_bd}
Let $\MOne$ and $\MTwo$ be smooth manifolds of dimensions $m_1$ and $m_2$, respectively (possibly with boundaries). Suppose that $\MOne$ is compact and oriented. Let $\omega \in \Omega^{m_1}(\MTwo)$ be an exact $m_1$-form on $\MTwo$. 
Let $f_0, f_1:\MOne\rightarrow \MTwo$ be smooth maps coinciding on $\partial \MOne$.
Then
 \[
\int_{\MOne}f_0^*\omega=\int_{\MOne} f_1^*\omega.
 \]
 \end{lemma}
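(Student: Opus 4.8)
The plan is to exploit the only two hypotheses that actually carry weight: the exactness of $\omega$ and the agreement of $f_0$ and $f_1$ on $\pl\MOne$. First I would use exactness to write $\omega = d\eta$ for some $\eta\in\Omega^{m_1-1}(\MTwo)$. By the naturality of the exterior derivative under pullback, $f_i^*\omega = f_i^*\,d\eta = d(f_i^*\eta)$ for $i=0,1$, so each integrand is itself an exact $m_1$-form on the compact, oriented $m_1$-dimensional manifold $\MOne$.

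Next I would apply Stokes' theorem to each of the two maps:
\[
\int_{\MOne} f_i^*\omega = \int_{\MOne} d(f_i^*\eta) = \int_{\pl\MOne} \iota^*(f_i^*\eta),
\]
where $\iota:\pl\MOne\hookrightarrow\MOne$ is the inclusion and $\pl\MOne$ carries the induced Stokes orientation. Compactness of $\MOne$ ensures the integrals are finite and that Stokes' theorem applies without qualification.

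The crux is then to show that the two boundary integrals coincide, and this is exactly where the hypothesis $f_0|_{\pl\MOne}=f_1|_{\pl\MOne}$ enters. By functoriality of the pullback, $\iota^*(f_i^*\eta) = (f_i\circ\iota)^*\eta = (f_i|_{\pl\MOne})^*\eta$. Since $f_0$ and $f_1$ restrict to the \emph{same} map $\pl\MOne\to\MTwo$, these two pulled-back $(m_1-1)$-forms on $\pl\MOne$ are literally equal, and therefore
\[
\int_{\pl\MOne} \iota^*(f_0^*\eta) = \int_{\pl\MOne} \iota^*(f_1^*\eta).
\]
Combining this with the displayed Stokes identities for $i=0$ and $i=1$ yields $\int_{\MOne} f_0^*\omega = \int_{\MOne} f_1^*\omega$, as claimed.

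I do not expect a genuine obstacle here; the argument is a clean application of Stokes' theorem and is insensitive to the interior behaviour of the maps, depending on them only through their boundary values. If any care is needed, it is in the bookkeeping of orientations (giving $\pl\MOne$ the Stokes orientation consistently in both applications of the theorem) and in making explicit that $\iota^*$ sees a map only through its restriction to $\pl\MOne$, which is precisely the observation that converts the hypothesis into the conclusion. This is the intrinsic counterpart of the telescoping integration-by-parts computation used in \corref{cor:det_depend_only_bdry}, with the primitive $\eta$ playing the role there played by the explicit wedge-product primitive of $\det\nabla f\, dV$.
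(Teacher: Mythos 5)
Your proof is correct and follows exactly the paper's own argument: write $\omega = d\eta$, use naturality of the exterior derivative under pullback, apply Stokes' theorem, and observe that the resulting boundary integral $\int_{\pl\MOne} (f_i|_{\pl\MOne})^*\eta$ depends only on the restriction to $\pl\MOne$. Your version merely makes the functoriality step $\iota^*(f_i^*\eta) = (f_i|_{\pl\MOne})^*\eta$ explicit, which the paper leaves implicit.
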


\begin{comment}
Every closed form on $\R^d$ is exact, and in particular, the standard volume form $dV$. This gives an alternative proof for \corref{cor:det_depend_only_bdry}.
\end{comment}

\begin{proof}
Let $f:\MOne \to \MTwo$ be smooth.
By assumption, $\omega=d\eta$ for some $\eta \in \Omega^{m_1-1}(\MTwo)$. Using the commutation of exterior differentation and pullbacks,
\[
\int_{\MOne}f^*\omega=\int_{\MOne}f^*d\eta=\int_{\MOne}df^*\eta=\int_{\partial\MOne}f^*\eta,
\]
which only depends on $f|_{\partial\MOne}$.
\end{proof}

\lemref{lem:integrals_exact_forms_depend_only_bd} can be used to prove that $E$ is a null-Lagrangian as follows: Since being a null-Lagrangian is equivalent to the satisfaction of the Euler-Lagrange equation by every map, this a local property. That is, it can be checked for maps between small balls (or other manifolds diffeomorphic to $\R^d$).  Since $f^* \dVolTwo$ is closed,  it is locally exact. Thus, \lemref{lem:integrals_exact_forms_depend_only_bd} implies the null-Lagrangian property.

We provide here a different argument, which is also classical:

\begin{lemma}
\label{lem:integrals_homotopic}
Let $\MOne$ and $\MTwo$ be smooth manifolds of dimensions $m_1$ and $m_2$, respectively (possibly with boundaries). Suppose that $\MOne$ is compact and oriented.  Let $\omega \in \Omega^{m_1}(\MTwo)$ be a closed $m_1$-form on $\MTwo$. 
Let $f_0, f_1:\MOne\rightarrow \MTwo$ be smooth maps that are homotopic relative to $\partial \M$.
Then,
 \[
\int_{\MOne}f_0^*\omega=\int_{\MOne} f_1^*\omega.
 \]
\end{lemma}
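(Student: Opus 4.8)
The plan is to prove Lemma~\ref{lem:integrals_homotopic} via the standard homotopy-invariance argument for integrals of closed forms, using the fact that a homotopy relative to the boundary produces a cylinder whose boundary consists precisely of the two copies of $\MOne$ (with opposite orientations) plus a piece coming from $\partial\MOne$ that contributes nothing because the homotopy is stationary there. Concretely, let $H:\MOne\times[0,1]\to\MTwo$ be a smooth homotopy with $H(\cdot,0)=f_0$, $H(\cdot,1)=f_1$, and $H(p,t)=f_0(p)=f_1(p)$ for all $p\in\partial\MOne$ and all $t$. I would consider the pullback form $H^*\omega\in\Omega^{m_1}(\MOne\times[0,1])$ and integrate $d(H^*\omega)$ over the cylinder $\MOne\times[0,1]$.

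The \textbf{key steps}, in order: First, since $\omega$ is closed and pullback commutes with the exterior derivative, $d(H^*\omega)=H^*(d\omega)=0$. Second, apply Stokes' theorem to the compact oriented manifold-with-corners $\MOne\times[0,1]$:
\[
0=\int_{\MOne\times[0,1]} d(H^*\omega) = \int_{\partial(\MOne\times[0,1])} H^*\omega.
\]
Third, decompose the boundary. With the product orientation, $\partial(\MOne\times[0,1])$ consists of the top face $\MOne\times\{1\}$, the bottom face $\MOne\times\{0\}$ (with opposite induced orientations), and the lateral face $\partial\MOne\times[0,1]$. The restriction of $H$ to the top and bottom faces is $f_1$ and $f_0$ respectively, so these contribute $\int_{\MOne}f_1^*\omega-\int_{\MOne}f_0^*\omega$ (up to a sign to be fixed by the orientation convention). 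Fourth, show the lateral contribution vanishes: on $\partial\MOne\times[0,1]$ the map $H(p,t)=f_0(p)$ is independent of $t$, so $H^*\omega$ restricted to this face has no $dt$-component; since the face is $m_1$-dimensional but the integrand, being a pullback of an $m_1$-form under a map that factors through the $(m_1-1)$-dimensional manifold $\partial\MOne$, is degenerate, the integral is zero. Assembling these yields $\int_{\MOne}f_0^*\omega=\int_{\MOne}f_1^*\omega$.

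The \textbf{main obstacle} is the careful handling of the lateral boundary term and of orientations, particularly because $\MOne$ may itself have a boundary, so $\MOne\times[0,1]$ is a manifold with corners rather than a smooth manifold with boundary. I would either invoke the version of Stokes' theorem valid for manifolds with corners, or smooth the corners; in either case the essential point is that the lateral face $\partial\MOne\times[0,1]$ contributes nothing. The cleanest way to see this vanishing is to observe that on the lateral face $H$ factors as $\partial\MOne\times[0,1]\xrightarrow{\text{pr}}\partial\MOne\xrightarrow{f_0}\MTwo$ through a manifold of dimension $m_1-1$, so the pullback of any $m_1$-form vanishes identically there. This reduces the entire argument to bookkeeping of orientation signs, which is routine once the product orientation convention is fixed.
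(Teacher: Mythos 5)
Your proposal is correct and follows essentially the same route as the paper: pull $\omega$ back by the homotopy, use closedness and the commutation of pullback with $d$, apply Stokes' theorem on the cylinder $\MOne\times[0,1]$, and observe that the lateral face $\partial\MOne\times[0,1]$ contributes nothing because the homotopy is stationary there. Your added observations---that the lateral pullback vanishes since $H$ factors through the $(m_1-1)$-dimensional manifold $\partial\MOne$, and that the cylinder is a manifold with corners---are correct refinements of details the paper leaves implicit.
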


\begin{proof}
Let $F:\MOne\times I\rightarrow \MTwo$ be a smooth homotopy between $f_0$ and $f_1$ relative to $\pl \MOne$, i.e., $f_t|_{\partial \MOne}=f_0|_{\partial \MOne}$ for every $t$. Then,
\[
\begin{split}
 0 &= \int_{\MOne\times I} F^\ast d\omega \\
 &= \int_{\MOne\times I} dF^\ast \omega \\
 &= \int_{\partial(\MOne\times I)} F^\ast \omega \\ 
 &= \int_{\MOne\times\{1\}} F^\ast \omega - \int_{\MOne\times\{0\}}F^\ast \omega + \int_{\partial \MOne \times (0,1)} F^\ast \omega \\ &= \int_{\MOne} f_1^\ast\omega - \int_{\MOne}f_0^\ast \omega,
\end{split}
\]
The first equality follows from $\omega$ being closed, i.e., $d\omega=0$. The passage to the second line follows from the commutation of exterior differentiation and pullbacks. The passage to the third line follows from Stokes' theorem.
The passage to the fourth line follows from the decomposition of $\partial(\MOne\times I)$ into three submanifolds.  Finally, in the passage to the fifth line we used the fact that $F|_{\MOne\times\{0\}} = f_0$, $F|_{\MOne\times\{1\}} = f_1$, and the fact that since $F$ is a homotopy relative to $\partial \MOne$, the restriction of $F^\ast\omega$ to $\partial \MOne \times (0,1)$ vanishes.
\end{proof}

\begin{corollary}[Pullbacks of closed forms are null-Lagrangians]
\label{cor:Null-Lag1}
Let $\MOne$, $\MTwo$ and $\omega$ be defined as in \lemref{lem:integrals_homotopic}. Let $E:C^{\infty}(\MOne,\MTwo) \to \R$ be given by \[
E(f)=\int_{\MOne} f^*\omega.  
\]
Then $E$ is a null-Lagrangian. 
\end{corollary}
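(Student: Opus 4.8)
The plan is to observe that this corollary is an immediate consequence of \lemref{lem:integrals_homotopic}, which carries out all the substantive work. Recall that, by the definition given at the start of \secref{sec:Null-Lagrangians_approach}, $E$ is a null-Lagrangian precisely when every smooth map $f:\MOne\to\MTwo$ is a critical point of $E$ with respect to smooth homotopies relative to $\pl\MOne$. Accordingly, I would fix an arbitrary $f\in C^{\infty}(\MOne,\MTwo)$ together with an arbitrary smooth variation $f_t$ of $f$ that is relative to the boundary, i.e. $f_0=f$ and $f_t|_{\pl\MOne}=f|_{\pl\MOne}$ for all $t$, and show that $\left.\frac{d}{dt}\right|_{t=0}E(f_t)=0$.

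The key step is to note that such a variation furnishes, for each fixed $t$, a smooth homotopy relative to $\pl\MOne$ between $f_0=f$ and $f_t$. Since $\omega$ is closed by hypothesis, \lemref{lem:integrals_homotopic} applies and yields $E(f_t)=\int_{\MOne}f_t^*\omega=\int_{\MOne}f_0^*\omega=E(f_0)$ for every $t$. Thus the function $t\mapsto E(f_t)$ is in fact constant, not merely stationary at the origin; differentiating at $t=0$ gives $\left.\frac{d}{dt}\right|_{t=0}E(f_t)=0$, so $f$ is a critical point. As $f$ was arbitrary, every smooth map is critical, and hence $E$ is a null-Lagrangian.

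I do not anticipate any genuine obstacle here, precisely because the homotopy invariance established in \lemref{lem:integrals_homotopic} is strictly stronger than the criticality demanded by the definition of a null-Lagrangian: it supplies constancy of $E$ along the entire variation rather than merely the vanishing of the first variation. The only point requiring a modicum of care is bookkeeping at the level of definitions. One must check that a variation relative to the boundary literally furnishes a homotopy relative to $\pl\MOne$ in the sense required by the lemma (after the harmless reparametrization of the parameter interval $[0,t]$ to $[0,1]$), and that the hypothesis actually invoked is the closedness of $\omega$ rather than its exactness, as in the complementary \lemref{lem:integrals_exact_forms_depend_only_bd}.
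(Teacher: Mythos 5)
Your proposal is correct and follows exactly the paper's own argument: both invoke \lemref{lem:integrals_homotopic} to conclude that $E(f_t)=E(f_0)$ for any variation $f_t$ relative to $\pl\MOne$, so that $E$ is constant (not merely stationary) along the variation and every map is critical. Your added care about the definitional bookkeeping (variation versus homotopy, closedness versus exactness) is sound but does not change the substance of the proof.
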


\begin{proof}
Let $f_t:\MOne \to \MTwo$ be a smooth variation relative to $\pl \MOne$ of $f_0=f$. By \lemref{lem:integrals_homotopic}, 
$E(f_t)=E(f_0)$, hence $E(f_t)$ is constant. 
\end{proof}

In the case where $m_1=m_2$, every $m_1$-form on $\MTwo$ is closed. Hence the functional $E$ defined by \eqref{eq:Jacobian_functional} is a null-Lagrangian.

Note that  we limited our treatment to compact domains. For non-compact domains, one has to restrict the functional to compact subsets of $\MOne$ and consider compactly-supported variations. 

We proceed to derive the Euler-Lagrange equation for the functional \eqref{eq:Jacobian_functional}:

\begin{proposition}
\label{lem:E-l_det}
Let $\MOne$ and $\MTwo$ be smooth, oriented $d$-dimensional  Riemannian manifolds; The Euler-Lagrange equation for the functional
\beq
\label{eq:Jacobian_functional}
E(f)=\int_{\MOne} f^* \dVolTwo=\int_{\MOne} \Det df\, \dVolOne
\eeq
is $\delta_{\nabla^{f^*T\MTwo}} \Cof df = 0$
\end{proposition}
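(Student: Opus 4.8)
The plan is to mirror the Euclidean computation of \propref{lem:E-l_det_Euc}, replacing the affine variation $f+tw$ by a genuine variation through maps, and the elementary cofactor--gradient identity by its bundle version, \lemref{lem:Cofactor_grad_Determinant_bundle}. Concretely, let $\xi\in\Gamma(f^*T\MTwo)$ be compactly supported in the interior of $\MOne$, and let $F:\MOne\times(-\ep,\ep)\to\MTwo$ be a smooth variation with $F(\cdot,0)=f$ and $\left.\partial_t F\right|_{t=0}=\xi$; write $f_t=F(\cdot,t)$. First I would move the $t$-derivative inside the integral,
\[
\left.\frac{d}{dt}\right|_{t=0} E(f_t)=\int_{\MOne}\left.\frac{d}{dt}\right|_{t=0}\Det df_t\,\dVolOne,
\]
reducing everything to the pointwise derivative of $\Det df_t$.

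The key step is to differentiate $\Det df_t$. Here one must be careful that the target bundle $f_t^*T\MTwo$ itself varies with $t$, so the naive $t$-derivative is ill-defined; the remedy is to regard the family $\{df_t\}$ as a single bundle map $A:\pi^*T\MOne\to F^*T\MTwo$ over $\MOne\times(-\ep,\ep)$, where $\pi$ is the projection onto $\MOne$, carrying the metrics and orientations induced by $\gOne$ and $\gTwo$, and to use the pullback connection. Applying \lemref{lem:Cofactor_grad_Determinant_bundle} in the $t$-direction gives
\[
\left.\frac{d}{dt}\right|_{t=0}\Det df_t=\IP{\Cof df}{\tfrac{D}{dt}\big|_{t=0}df_t}_{\gOne,\gTwo}.
\]
It then remains to identify $\tfrac{D}{dt}|_{t=0}df_t$ with $\nabla^{f^*T\MTwo}\xi$. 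This is where the geometry enters: by the symmetry of the Levi-Civita connection, since $[\partial_t,\partial_{x^i}]=0$ and the connection is torsion-free, one has $\tfrac{D}{dt}\,\partial_{x^i}F=\tfrac{D}{\partial x^i}\,\partial_t F$, and evaluating at $t=0$ yields $\tfrac{D}{dt}|_{t=0}df_t=\nabla^{f^*T\MTwo}\xi$ as an element of $\Omega^1(\MOne;f^*T\MTwo)$.

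Combining these ingredients,
\[
\left.\frac{d}{dt}\right|_{t=0}E(f_t)=\int_{\MOne}\IP{\Cof df}{\nabla^{f^*T\MTwo}\xi}_{\gOne,\gTwo}\,\dVolOne=\int_{\MOne}\IP{\delta_{\nabla^{f^*T\MTwo}}\Cof df}{\xi}_{\gTwo}\,\dVolOne,
\]
where the second equality is precisely the definition of the coderivative as the adjoint of $\nabla^{f^*T\MTwo}$ (\defref{def:coderivative}), valid because $\xi$ is compactly supported in the interior so that no boundary term arises. Since this vanishes for every such $\xi$, the fundamental lemma of the calculus of variations forces the Euler-Lagrange equation $\delta_{\nabla^{f^*T\MTwo}}\Cof df=0$. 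I expect the main obstacle to be the bookkeeping in the second paragraph: making rigorous that \lemref{lem:Cofactor_grad_Determinant_bundle}, stated for a bundle map over a fixed base, legitimately computes the $t$-derivative of $\Det df_t$ over the product $\MOne\times(-\ep,\ep)$, while correctly handling the moving target bundle $f_t^*T\MTwo$ through the pullback connection and the symmetry lemma.
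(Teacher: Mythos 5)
Your proposal is correct and follows essentially the same route as the paper: the paper likewise packages the family $\{d\phi_t\}$ as a section of $\brk{P^{*}\brk{T\MOne}}^* \otimes \psi^{*}\brk{T\MTwo}$ over $\MOne\times I$, applies \lemref{lem:Cofactor_grad_Determinant_bundle} in the $t$-direction, and identifies $\nabla^W_{\pd{}{t}} d\phi_t |_{t=0}$ with $\nabla^{\phi^*T\MTwo}V$ (citing Eells--Lemaire, which is exactly the torsion-freeness argument you sketch) before invoking the adjoint definition of the coderivative. The only cosmetic difference is that you prove the symmetry identity inline while the paper cites it.
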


\begin{proof}
Let $\phi: \MOne \to \MTwo$ be a smooth map, and let $V \in \Ga(\phi^*T\MTwo)$. Let $\phi_t:\MOne \to \MTwo$ be a smooth variation constant on $\pl \MOne$, such that $\phi_0=\phi$ and $\left. \partial\phi_t/\partial t  \right|_{t=0}=V$. Our goal is to prove that
\[
\left. \deriv{}{t} E(\phi_t) \right|_{t=0}=\int_{\MOne} \IP{\delta_{\nabla^{\phi^*T\MTwo}} \big(\Cof d\phi \big)}{V}_{\phi^*T\MTwo} \dVolOne.
\]

Denote by $\psi : \MOne \times I \to \MTwo$ the map $\psi(p,t) = \phi_t\left( p \right)$. Let $P:\MOne \times I \to \MOne$ be the projection $P(p,t)=p$. Consider the following two vector bundles over $\MOne \times I$:
(i) $\brk{P^*\brk{T\MOne}}^* \cong P^{*}\brk{T^*\MOne}$, whose fiber over $(p,t)$ is $T^*_p\MOne$, and 
(ii) $\psi^{*}\brk{T\MTwo}$, whose fiber over $(p,t)$ is $T_{\phi_t(p)}\MTwo$.

Note that $(d\phi_t)_p:T_p\MOne \to T_{\phi_t(p)}\MTwo$, i.e., $(d\phi_t)_p \in T^*_p\MOne \otimes T_{\phi_t(p)}\MTwo$.
Running over all the pairs $(p,t) \in \MOne \times I$ we obtain a section of the vector bundle $W= \brk{P^{*}\brk{T\MOne}}^* \otimes \psi^{*}\brk{T\MTwo}$.

Now,
\beq 
\label{eq:energy_time_deriv_Riemann}
\begin{split}
\left. \deriv{}{t} E( \phi_t) \right|_{t=0}=&\int_{\MOne} \left. \deriv{}{t}  \Det (d\phi_t) \right|_{t=0}   \dVolOne \\
&= 
\int_{\MOne} \left.    \IP{\Cof (d\phi_t)}{\nabla_{\pd{}{t}}^W d\phi_t}_{P^{*}\brk{T\MOne},\psi^{*}\brk{T\MTwo}} \right|_{t=0} \dVolOne \\ 
&\quad +
  \int_{\MOne}   \IP{\Cof d\phi}{\left. \nabla^W_{\pd{}{t}} d\phi_t   \right|_{t=0} }_{T\MOne,\phi^*T\MTwo} \dVolOne ,
\end{split}
\eeq
where the second equality follows from an application of \lemref{lem:Cofactor_grad_Determinant_bundle} (with $A=d\phi_t$ and $V=\partial/\partial t$).

It is well-known that 
\beq 
\label{eq:symmetric_equality}
\left. \nabla^W_{\pd{}{t}} d\phi_t  \right|_{t=0} = \nabla^{\phi^*T\MTwo}V,
\eeq 
(see e.g.~\cite[Proposition 2.4, Pg 14]{EL83}).
Eqs. \eqref{eq:energy_time_deriv_Riemann} and \eqref{eq:symmetric_equality} imply
\[
\begin{split}
\left. \deriv{}{t} E\left( \phi_t \right) \right|_{t=0} &= \int_{\M}   \IP{\Cof d\phi}{\nabla^{\phi^*T\MTwo}V }_{T\MOne,\phi^*T\MTwo} \dVolOne \\ &=\int_{\M} \IP{\delta_{\nabla^{\phi^*T\MTwo}} \big(\Cof d\phi \big)}{V}_{\phi^*T\MTwo} \dVolOne,
\end{split}
\]
where the last equality follows from \defref{def:coderivative} of the coderivative.
\end{proof}

Two comments are in order: (i)
The identity \eqref{eq:symmetric_equality} is fundamental in the computation of variations between manifolds.  It is proved in  \cite[Proposition 2.4]{EL83}, relying on the symmetry of the connection on $T\MTwo$; it does not require metricity.
(ii)  The application of \lemref{lem:Cofactor_grad_Determinant_bundle} in \eqref{eq:energy_time_deriv_Riemann} requires the  connections on  $P^{*}\brk{T^*\MOne}$ and $\psi^{*}\brk{T\MTwo}$ to be metrically-compatible. Since the Levi-Civita connections on $T\MOne$ and $T\MTwo$ are metrically-compatible, so are all their induced connections.
Thus, both the metricity and the symmetry of the Levi-Civita connection on $T\MTwo$ were used in the proof of \lemref{lem:E-l_det}.



\paragraph{Acknowledgements}
We are grateful to Giuseppe Zurlo, Amitai Yuval and Deane Yang for enlightening discussions. 
This research was partially funded by the Israel Science Foundation (Grant No. 1035/17), and by a grant from the Ministry of Science, Technology and Space, Israel and the Russian Foundation for Basic Research, the Russian Federation.

\appendix
%
%
%

\section{Proof of Lemma~\ref{lem:Cofactor_grad_Determinant_bundle}}
\label{app:deriv_det_bundle}

\lemref{lem:Cofactor_grad_Determinant_bundle} is concerned with the differentiation of the determinant of a bundle morphism between vector bundles. Since the intrinsic definition of the determinant (\ref{def:intrinsic_det}) involves the Hodge-dual, we will need Identity~\ref{eq:Hodge_star_cov_diff_commute} below regarding the behavior of the Hodge operator with respect to covariant differentiation. 

Let $\M$ be a smooth $d$-dimensional manifold.
Let $E$ be an oriented vector bundle over $\M$ (of arbitrary finite rank $n$), endowed with a Riemannian metric $\h$ and a metrically-compatible affine connection $\nabla^E$. 
Note that
$\nabla^E$ induces a connection on $\Lambda_k(E)$ (also denoted by $\nabla^E$); this induced connection is compatible with the metric induced on $\Lambda_k(E)$ by $\h$. Denote by $\star^k_E$ the fiber-wise Hodge-dual $\Lambda_k(E)\to \Lambda_{n-k}(E)$ (which is induced by the orientation on $E$ and $\h$).
Then, 
\beq
\label{eq:Hodge_star_cov_diff_commute}
\star^k_{E} (\nabla^E_X  \beta)=\nabla^E_X (\star^k_{E} \beta)
\eeq
for every $\beta \in \Gamma(\Lambda_k(E))$ and $X \in \Gamma(T\M)$.

This follows from the fact $\star^k_E$ is consistent with the metric, hence it is parallel with respect to  metrically-compatible connections. Indeed,
\[
\nabla^E_X (\star^k_{E} \beta)=(\nabla_X   \star^k_{E}) \beta+  \star^k_{E} (\nabla^E_X  \beta)=\star^k_{E} (\nabla^E_X  \beta).
\]


\emph{Proof of Lemma~\ref{lem:Cofactor_grad_Determinant_bundle}:}
   Let $e_1,\dots,e_d$ be a positive orthonormal frame of $E$. 
  \[
    \Det(A)=   \star^d_{F} \circ \extp^d A \circ \star^0_{E}(1)=   \star^d_W \extp^d A \big( e_1 \wedge \dots \wedge e_d \big)= \star^d_{F} \big( Ae_1 \wedge \dots \wedge Ae_d \big)
  \]

  Using the Leibniz rule for the wedge product, we get
\beq
\label{eq:eq_deriv_det}
\begin{split}
  V\Det A &= V \star^d_{F}\big( Ae_1 \wedge \dots \wedge Ae_d \big) \stackrel{(1)}{=}   \star^d_{F}  \nabla_V \big( Ae_1 \wedge \dots \wedge Ae_d \big) \\
 	&=  \star^d_{F}  \sum_{i=1}^d  Ae_1 \wedge \dots \wedge \nabla_V (Ae_i) \wedge \dots \wedge Ae_d   \\
	&= \star^d_{F}  \sum_{i=1}^d  Ae_1 \wedge \dots \wedge (\nabla_V A)e_i \wedge \dots \wedge Ae_d  + \star^d_{F}  \sum_{i=1}^d  Ae_1 \wedge \dots \wedge  A(\nabla_{V}e_i) \wedge \dots \wedge Ae_d,
\end{split}
\eeq
Where equality $(1)$ follows from \eqref{eq:Hodge_star_cov_diff_commute}. (Here we used the metricity of the connection on $F$). 

Analyzing the second summand, we get
\[
 \star^d_{F} \extp^d A( \sum_{i=1}^d  e_1 \wedge \dots \wedge \nabla_Ve_i \wedge \dots \wedge e_d)=
 \star^d_{F} \extp^d A\big( \nabla_V  (e_1 \wedge \dots \wedge e_i \wedge \dots \wedge e_d) \big)=0,
 \]
where in the last equality we used the metricity of the connection on $E$.
After eliminating the second summand,  \eqref{eq:eq_deriv_det} becomes
\[
\begin{split}
V\Det A &=  \sum_{i=1}^d \star^d_F (-1)^{i-1} \big( (\nabla_V A)e_i \wedge A e_1  \wedge \dots \wedge \widehat Ae_i \wedge \dots \wedge Ae_d \big) \\
 	&  =  (-1)^{d-1} \sum_{i=1}^d (-1)^{i-1} \star^d_F  \big( (\nabla_V A)e_i \wedge  \star^1_F \star^{d-1}_F ( A e_1 \wedge \dots \wedge \widehat Ae_i \wedge \dots \wedge Ae_d) \big)  \\
	 &   = (-1)^{d-1} \sum_{i=1}^d (-1)^{i-1} \IP{(\nabla_V A)e_i}{ \star^{d-1}_F ( A e_1 \wedge \dots \wedge \widehat Ae_i \wedge \dots \wedge Ae_d)}_F  \\ 
	 &= (-1)^{d-1} \sum_{i=1}^d \IP{(\nabla_V A)e_i}{ \star^{d-1}_F \big( \extp^{d-1} A ( \star_V^1 e_i )\big)}_F\\
 	&= \sum_{i=1}^d \IP{(\nabla_V A)e_i}{  \Cof A(e_i) }_F 	= \IP{\Cof A}{\nabla_V A}_{E,F}.
  \end{split}
  \]
\hfill\ding{110}

\section{Coordinate representation of the Riemannian Piola identity}
\label{app:coordinates}

For completeness, we formulate the Riemannian Piola identity in local coordinates:
Let Roman indices $i,j,k$ denote coordinates on $(\MOne,\g)$ and Greek indices $\alpha,\beta,\gamma$ denote coordinates on $(\MTwo,\h)$.
The functions $\g_{ij}$ and $\h_{\alpha\beta}$ are the entries of the metrics $\g$ and $\h$, respectively, and $\Gamma^\alpha_{\beta\gamma}$ are the Christoffel symbols of $\nabla^{\MTwo}$.
The coordinate representation of the differential $df$ is $\pl_i f^\alpha$; similarly, $\Cof df=(\Cof df)_i^\alpha$, that is
\[
\Cof df=(\Cof df)_i^\alpha \, dx^i \otimes f^*\pl_{\alpha}.
\]
Let $\xi=\xi^\alpha\, f^*\partial_\alpha  \in\Gamma(f^*T\MTwo)$ be a compactly-supported section. In local coordinates
\[
\nabla^{f^*T\MTwo} \xi=(\nabla^{f^*T\MTwo} \xi)_j^\beta \, dx^j \otimes f^*\pl_{\beta},
\]
where
\[
(\nabla^{f^*T\MTwo} \xi)_j^\beta=\pl_j \xi^\beta +  (f^*\Gamma^\beta_{\gamma\delta}) \pl_j f^\gamma  \xi^\delta.
\]
Equation \eqref{eq:Cof_d} reads
\beq
\label{eq:Cof_d__coordinates}
\int_{\MOne} \g^{ij} (f^*\h_{\alpha\beta})\, (\Cof df)_i^\alpha \, \brk{\pl_j \xi^\beta +(f^*\Gamma^\beta_{\gamma\delta}) \pl_j f^\gamma  \xi^\delta} |\g|^{1/2}\,dx = 0,
\eeq

where $|\g|$ is the determinant of the matrix representing the metric $\g$.

We first note that

\[
\g^{ij} (f^*\h_{\alpha\beta})\, (\Cof df)_i^\alpha  = ((\Cof df)^T)^j_\beta,
\]
and that
\[
\Det df = \frac{|f^*\h|^{1/2}}{|\g|^{1/2}} \det [df],
\]
where $[df]$ stands here for the matrix whose entries are $\partial_i f^\alpha$. 

Hence, the coordinate representation of the Laplace expansion \eqref{eq:cof_det_expansion} is
\[
\g^{ij} (f^*\h_{\alpha\beta})\, (\Cof df)_i^\alpha \pl_j f^\gamma = \frac{|f^*\h|^{1/2}}{|\g|^{1/2}} \det [df]\, \delta^\gamma_\beta.
\]

Secondly, 
\beq
\label{eq:eq_coord_middle_1}
\g^{ij} (f^*\h_{\alpha\beta})\, (\Cof df)_i^\alpha = \frac{|f^*\h|^{1/2}}{|\g|^{1/2}} (\cof [df]^T)^j_\beta, 
\eeq
where, as before, $\cof$ denote the matrix-cofactor. Thus, \eqref{eq:Cof_d__coordinates} reads
\[
\int (\cof [df]^T)^j_\gamma \pl_j \xi^\gamma  |f^*\h|^{1/2} \, dx 
+ \int  (f^*\Gamma^\beta_{\beta\gamma}) \det [df]  \xi^\gamma |f^*\h|^{1/2} \, dx  = 0.
\]

Integrating by parts, and since this equation holds for every vector field $\xi$, 
\beq
\label{eq:Cof_d__coordinates_final}
 - \frac{1}{|f^*\h|^{1/2}} \partial_j\brk{(\cof [df]^T)^j_\gamma \  |f^*\h|^{1/2}}  
+ (f^*\Gamma^\beta_{\beta\gamma}) \det [df]     = 0.
\eeq
Note that the metric and the connection on the source manifold do not appear in this equation. 

In \cite[p. 117, bottom]{MH83}, the authors give the following coordinate expression for the Riemannian Piola identity, 
\[
\pl_j \brk{|f^*\h|^{1/2}\,  (\cof [df]^T)_j^\delta}=0,
\]
which lacks the  connection term; hence is invalid even in the Euclidean setting, if choosing a coordinate system for which $\Gamma^\beta_{\beta\gamma}\ne 0$.

Eq.~\eqref{eq:Cof_d__coordinates_final} can be further simplified: using the classical identity
\[
f^*\Gamma^\beta_{\beta\gamma}=\frac{1}{ |f^*\h|^{1/2}}f^*\pl_{\gamma}  \brk{ |\h|^{1/2}},
\]
\eqref{eq:Cof_d__coordinates_final} reduces to 
\beq
\partial_j\brk{(\cof [df]^T)^j_\gamma } =0,
\label{eq:surprising}
\eeq
Perhaps surprisingly, the coordinate representation of $f$ satisfies a Piola identity that makes no reference to the Riemannian structures of $\MOne$ and $\MTwo$. At a second thought, this is not a surprise if we recall that the ``proof by calculation" of the Euclidean Piola identity boils down to the commutation of mixed derivatives, which is satisfied by the local representation of a twice-differentiable function regardless of any metric structure. Note, however, that in order to attribute to \eqref{eq:surprising} an intrinsic meaning one has to revert to the form \eqref{eq:Cof_d__coordinates_final}.

{\footnotesize
\newcommand{\etalchar}[1]{$^{#1}$}
\providecommand{\bysame}{\leavevmode\hbox to3em{\hrulefill}\thinspace}
\providecommand{\MR}{\relax\ifhmode\unskip\space\fi MR }
\providecommand{\MRhref}[2]{%
  \href{http://www.ams.org/mathscinet-getitem?mr=#1}{#2}
}

%

\end{document}